\documentclass[11pt,reqno]{amsart}

\usepackage{url}
\usepackage{graphicx}
\usepackage{mathrsfs}
\usepackage{color}
\usepackage{comment}
\usepackage{amssymb,amsthm,amsfonts}
\usepackage{enumitem}
\usepackage[margin=1.2in]{geometry}

\usepackage{xcolor}
\usepackage[colorlinks, linkcolor = blue, anchorcolor = blue, citecolor = blue,urlcolor= black]{hyperref}

\allowdisplaybreaks

\numberwithin{equation}{section}

\newtheorem{prop}{Proposition}[section]
\newtheorem{theo}[prop]{Theorem}
\newtheorem{lemm}[prop]{Lemma}
\newtheorem{coro}[prop]{Corollary}

\newtheorem*{claim*}{Claim}

\theoremstyle{definition}

\newtheorem{rema}[prop]{Remark}

\newcommand{\NN}{\mathbb{N}}

\newcommand{\RR}{\mathbb{R}}
\renewcommand{\SS}{\mathbb{S}}

\newcommand{\ZZ}{\mathbb{Z}}

\newcommand{\cB}{\mathcal B}

\newcommand{\cH}{\mathcal H}

\newcommand{\cQ}{\mathcal Q}

\def\fX{\mathfrak{X}}

\DeclareMathOperator{\vol}{vol}

\DeclareMathOperator{\genus}{genus}
\DeclareMathOperator{\ind}{index}

\newcommand{\bangle}[1]{\left\langle #1 \right\rangle}

\newcommand{\eps}{\varepsilon}
\renewcommand{\bar}{\overline}
\renewcommand{\hat}{\widehat}
\renewcommand{\tilde}{\widetilde}
\renewcommand{\leq}{\leqslant}
\renewcommand{\geq}{\geqslant}
\newcommand{\fh}[2]{\frac{#1}{#2}}

\renewcommand{\d}{\nabla}
\newcommand{\db}{\bar{\nabla}}
\newcommand{\la}{ \Delta}
\newcommand{\dt}{\de_t}
\newcommand{\di}{\de_i}
\renewcommand{\dj}{\de_j}
\renewcommand{\div}{\mathrm{div}}

\newcommand{\abs}[1]{\lvert#1\rvert}

\def\Xint#1{\mathchoice
	{\XXint\displaystyle\textstyle{#1}}%
	{\XXint\textstyle\scriptstyle{#1}}%
	{\XXint\scriptstyle\scriptscriptstyle{#1}}%
	{\XXint\scriptscriptstyle\scriptscriptstyle{#1}}%
	\!\int}
\def\XXint#1#2#3{{\setbox0=\hbox{$#1{#2#3}{\int}$ }
		\vcenter{\hbox{$#2#3$ }}\kern-.6\wd0}}

\def\dashint{\Xint-}

\newcommand{\ang}[1]{\langle #1 \rangle}

\newcommand{\weakto}{\rightharpoonup}

\newcommand{\de}{\partial}

\newcommand{\norm}[1]{\left\lVert#1\right\rVert}

\title[Topology of minimal submanifolds with finite total curvature]{Topology of complete minimal submanifolds in $\mathbb{R}^{n+m}$ with finite total curvature}

\author{Qi Ding}
\address{Shanghai Center for Mathematical Sciences, Fudan University, Shanghai 200438, China}
\email{dingqi@fudan.edu.cn}
\author{Lei Zhang}
\email{22110840013@m.fudan.edu.cn}
\date{}

\begin{document}

\begin{abstract}
In \cite{Chodosh17}, Chodosh, Ketover, and Maximo proved  finite diffeomorphism theorems for complete embedded minimal hypersurfaces of dimension $\leq 6$ with finite index and bounded volume growth ratio. 
In this paper, we adapt their method to study 
finite diffeomorphism types for complete immersed minimal submanifolds  of arbitrary codimension in Euclidean space with finite total curvature and Euclidean volume growth.
\end{abstract}

\maketitle

\setcounter{tocdepth}{1}
\tableofcontents

\section{Introduction}
For minimal surfaces in $\RR^3$, finite total curvature means that the Gaussian curvature integral is finite. Chern and Osserman \cite{Chern} proved that every minimal surface in $\RR^3$ with finite total curvature is conformally equivalent to a compact Riemann surface $\bar M$ punctured at a finite number of points, and the Gauss map on the surface can extend conformally to $\bar M$.
Collin \cite{collin1} proved that any properly embedded minimal
surface in $\RR^3$ with finite topology and more than one end, has finite total curvature. 
Colding and Minicozzi \cite{ColdingMincozzi1} removed the proper condition, where they proved that a complete embedded minimal surface $\Sigma$ with finite topology in $\RR^3$ must be proper. 
Meeks, Perez and Ros \cite{Meeks2} showed that the number of ends of $\Sigma$ is bounded by a constant depending on its genus.

Given an immersed minimal submanifold $M^{n}$  in $ \RR^{n+m}\,,$ $M$ is said to have \emph{finite total  curvature} if
 $$\int_{M} \abs{A}^{n}d\mu_M < \infty\,,$$
where $A$ denotes the second fundamental form of $M$ in $ \RR^{n+m}$, and $\mu_M$ denotes the volume element of $M$.

Anderson \cite{anderson1984compactification} gave a generalization of the
Chern-Osserman theorem \cite{Chern} on minimal surfaces of finite total curvature: a complete minimal submanifold $M^n$ with finite total curvature is diffeomorphic to a compact $C^{\infty} $ manifold $\bar{M}^n$ punctured at a finite number of points $\{p_i\}_{1}^{\ell} \in \bar{M}^n$ and the Gauss map $\gamma:M^n \to G_{n,m}$ extends to a $C^{n-2}$ map $\bar \gamma: \bar  M^n \to G_{n,m} $ of the compactification(where $G_{n,m}$ denotes the Grassmann manifold of $n$-planes in Euclidean $(n+m)$-space). 
In particular, $M$ has Euclidean volume growth with ratio bounded by a constant depending on $\ell$.
For complete minimal hypersurfaces in $\RR^{n+1}$ with $3 \leq n \leq 6\,,$ Tysk \cite{Tysk} proved that finite index and Euclidean volume growth imply finite total curvature.

Chodosh, Ketover, and  Maximo \cite[Theorem 1.1]{Chodosh17} proved that
for a fixed  closed Riemannian manifold $(M^n,g)(3 \leq  n \leq 7)\,,$ 
 there can be at most $ N = N(M,g,\Lambda,I)$ distinct diffeomorphism types in the set of
 embedded minimal hypersurfaces $\Sigma \subset (M,g)$ with $\ind(\Sigma) \leq  I$ and $\vol_g(\Sigma) \leq  \Lambda\,.$
 In particular, for $n=3\,$, there is $r_0 = r_0(M,g,\Lambda,I)$ so that
 any embedded minimal surface $\Sigma$ in $(M^3,g)$ with $\ind(\Sigma) \leq I$ and $\mathrm{area}_g(\Sigma) \leq \Lambda$ has
 $\genus(\Sigma) \leq  r_0$; for $4 \leq n \leq 7$, there is $N = N(n,I,\Lambda) \in  \NN$ so that there are at most
 $N$ mutually non-diffeomorphic complete embedded minimal hypersurfaces $\Sigma^{n-1} \subset  \RR^n$
 with $\ind(\Sigma) \leq I$ and $\vol(\Sigma\cap B_R(0)) \leq \Lambda R^{n-1}$ for all $R > 0$ \cite[Theorem 1.2]{Chodosh17}.

Buzano-Sharp \cite{sharp} proved both qualitative estimates on the total curvature and finitely-many diffeomorphism types of closed embedded minimal hypersurfaces with a priori bound on their index and area in closed Riemannian manifolds with dimension $\leq 7$. Antoine Song \cite{songantoine1} introduced a combinatorial argument and proved that for every closed embedded minimal hypersurface $\Sigma$ with area at most $A>0$  in a closed Riemannian manifold $(M^{n+1},g)$ with $3\leq n+1\leq 7$,  there is a constant $C_A>0$ depending only on $n$, $g$, and $A$ so that the sum of Betti number of $\Sigma$ is bounded above by $ C_A \big(1+\ind(\Sigma)\big)$. Edelen proved in \cite{edelen} that the space of smooth, closed, embedded minimal hypersurfaces $\Sigma$ in a closed Riemannian 8-manifold $(M^8,g)$ with a priori bounds $\cH^7(\Sigma) \leq \Lambda $ and $\ind(\Sigma) \leq  I$  divides into finitely-many diffeomorphism types, and this
finiteness continues to hold if one allows the metric $g$ to vary, or $\Sigma$ to be singular.

We get a finiteness result for minimal submanifolds under the conditions of uniformly bound total curvature and Euclidean volume growth. This can be seen as a quantitative generalization of Anderson's Theorem \cite{anderson1984compactification}.

\begin{theo} \label{theo.finite.top}
For fixed $n\,,m \in \ZZ^+\,,n \geq 3\,, m \geq1\,,$ and $ \Gamma \, ,\Lambda \in \RR \,,  \Gamma\,,\Lambda \geq 0\,$, there exists $N = N(n,m,\Gamma,\Lambda) \in \NN$ so that there are at most $N$ mutually non-diffeomorphic complete immersed minimal submanifolds $M^n$ in $\RR^{n+m}$ satisfying that  $\int_{M} \abs{A}^{n}d\mu_M  \leq \Gamma$ and 
$\vol_M ( B_R(0)) \leq  \Lambda R^n$ for any $R>0\,.$
\end{theo}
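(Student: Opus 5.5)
We argue by contradiction, following the scheme of Chodosh--Ketover--Maximo \cite{Chodosh17}. Suppose there is a sequence $M_j$ of complete immersed minimal submanifolds satisfying the hypotheses of Theorem \ref{theo.finite.top}, and that the $M_j$ are mutually non-diffeomorphic. We will extract a subsequence in which all but finitely many are diffeomorphic, a contradiction.

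\textbf{Local regularity.} The basic tool is an $\varepsilon$-regularity estimate. There is $\varepsilon_0=\varepsilon_0(n,m,\Lambda)>0$ such that if $\int_{M\cap B_{2r}(x)}|A|^n\le\varepsilon_0$, then $\sup_{M\cap B_r(x)} r|A|\le C$, and in fact $r|A|$ is small there. This follows from the Simons-type inequality $\Delta|A|^2\ge -c|A|^4+2|\nabla A|^2$, Michael--Simon Sobolev inequality and Moser iteration, using the volume bound to control constants. By scale invariance of $\int|A|^n$, the total curvature bound $\Gamma$ implies that at each scale there are at most $K=K(n,\Gamma,\varepsilon_0)$ disjoint balls on which curvature concentrates.

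\textbf{Ends.} Anderson's theorem \cite{anderson1984compactification} shows that outside a large ball each end is a graph over a multiplicity-one plane. We need this quantitatively. Choose a radius $R_j$ so that $\int_{M_j\setminus B_{R_j}}|A|^n<\varepsilon_0$, and rescale so that $R_j=1$. Then on $M_j\setminus B_1$ the quantity $|x||A|$ is small. Combined with monotonicity, the density at infinity is bounded by $\Lambda/\omega_n$, and $|x||A|$ small forces each end to be a graph over an $n$-plane with small gradient. There are at most $\Lambda/\omega_n$ such ends, each diffeomorphic to $S^{n-1}\times[1,\infty)$. This reduces the problem to the compact pieces $M_j\cap \bar B_2$, with boundary a controlled union of spheres.

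\textbf{Bubble tree inside the ball.} Pass to a subsequence so that $M_j\cap B_2$ converges as varifolds with bounded density to a limit $M_\infty$. Away from at most $K$ concentration points $p_1,\dots,p_k$, the convergence is smooth and graphical with finite multiplicity, by $\varepsilon$-regularity. Multiplicity may be larger than one since the submanifolds are immersed, but each sheet is a graph, so topology is still controlled. Near each $p_i$, rescale at the concentration scale to obtain a new sequence satisfying the same hypotheses, with the same $\Gamma,\Lambda$ by scale invariance. Each blow-up absorbs at least $\varepsilon_0$ of total curvature, so the process terminates after at most $\Gamma/\varepsilon_0$ steps.

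The neck regions between scales are annuli $B_{s}\setminus B_{\delta}$ with small total curvature. Arguing as for the ends, on these regions $|x||A|$ is small on all dyadic annuli. Consequently the neck is a union of graphical pieces over cones, which must be planes, and each piece is diffeomorphic to $S^{n-1}\times I$.

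\textbf{Main obstacle.} The hardest step is this neck analysis. We must show that small scale-invariant curvature on every dyadic annulus, together with the density bound, yields a graphical description across the whole long annulus, without sheets twisting between scales. To handle this, we would first use monotonicity and the drop of density ratio across scales to see that the density is almost constant on the annulus. We would then show that the tangent planes change slowly, controlling the variation by $\int|A|^n$ via the estimate $|x||A|\to0$ uniformly. Given this, each $M_j$ is assembled from finitely many pieces: smooth limits at each level of the bounded-depth tree, necks and ends. Each piece belongs to a finite list of diffeomorphism types, since for large $j$ the pieces are diffeomorphic to graphs over the limits, and the gluing is along spheres. Hence only finitely many diffeomorphism types occur, contradicting the assumption.
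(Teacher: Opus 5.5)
\textbf{There is a genuine gap, in the neck analysis.} Your architecture broadly matches the paper's: curvature estimates away from finitely many points, reduction to $M_j\cap\bar B_2$ via the ends, bounded-depth blow-ups with quantized curvature, and gluing. But the step you flag as the main obstacle is not resolved by the mechanism you propose.

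\emph{Why the proposed neck argument fails.} Smallness of $|x||A|$ on every dyadic annulus only says the tangent plane turns by at most $\epsilon$ per dyadic scale. Along a radial path from radius $\delta$ to $s$ the rotation is bounded only by $\int_\delta^s|A|\,dr\le\epsilon\log(s/\delta)$, and on a neck $\log(s/\delta)\to\infty$. Using smallness of $\int|A|^n$ over the whole neck instead gives, after H\"older, a bound like $(\int|A|^n)^{1/n}\log(s/\delta)^{(n-1)/n}$, with the same logarithmic loss. Near-constancy of the density ratio says nothing about the directions of the sheets. So you have no argument that the neck consists of ``graphical pieces over cones, which must be planes.'' Actually controlling the drift would need an Allard--Almgren/Simon type decay theorem that uses the integrability of the rotational Jacobi fields of a plane. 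Your treatment of the ends relies on the same unsupported implication ``$|x||A|$ small $\Rightarrow$ graph over one plane.'' There you are rescued only by quoting Anderson's theorem, whose proof (cf.\ the $(2)\Rightarrow(3)$ step of Theorem \ref{eqival1}) rests on exactly such decay results.

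\emph{The missing idea: graphicality is not needed for topology.} Wherever $|A|(x)\,|\iota(x)-p|\le\frac14$, one has $\nabla^2|\iota-p|^2=2\bigl(g+\langle A(\cdot,\cdot),\iota-p\rangle\bigr)\ge\frac32\,g$. So the (suitably blended) distance function has no critical points other than strict local minima, and on components that reach the inner sphere it has none at all. Morse theory then makes the neck a product (inner cross-section)$\times[0,1]$; this is Lemma \ref{lemm.Annular.composition}. The inner cross-section is a union of standard spheres because it finitely covers the cross-sections of the ends of the inner blow-up limit, which is regular at infinity, and $\mathbb{S}^{n-1}$ is simply connected for $n\ge3$.

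\emph{How the paper organizes the scales.} It takes $\delta_j$ to be the smallest radius outside of which $|A|\,|\iota-\iota(p_j)|<\frac14$. If $\lambda_j\delta_j\to\infty$, it rescales by $\delta_j$ and removes the puncture of the limit at $0$ (Theorem \ref{theo.removal.singularity}). By the choice of $\delta_j$ this limit is non-flat, so it carries total curvature at least $2K_0$ by Corollary \ref{Bernstein1}. That drop in total curvature drives the induction on $I$ in Theorem \ref{prop.diffeo}.

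\emph{A smaller point.} Smooth convergence with multiplicity does not make each sheet a graph over the limit. The part of $M_j$ over a limit region is in general a possibly nontrivial finite cover. Finiteness of diffeomorphism types there comes from the fact that a compact manifold has only finitely many $k$-sheeted covers (Lemma \ref{lemm:covers}).
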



Our proof is inspired by the ideas in \cite{Chodosh17}, but we need further research in some situations. For instance, one point of concentration is a plane in  Proposition 7.1 of \cite{Chodosh17}, while in our situation it may be a non-flat minimal submanifold with finite total curvature. In Theorem \ref{prop.diffeo}, we can resolve it by an induction argument on the total curvature.

\subsection{Outline of the paper} 
\begin{itemize}
    \item In \S \ref{sec.pre}, we state several definitions
 and curvature estimates for minimal submanifolds which are needed in the following. 
 \item  In \S \ref{sec.geo}, we describe the geometry of ends of complete immersed minimal submanifolds in $\RR^{n+m}$ with finite total curvature, enlightened by \cite{schocen} and \cite{anderson1984compactification}.
This helps us to derive  curvature estimates away from finitely many points in Lemma \ref{lemm:curv.est}.
 \item In \S \ref{sec.top}, we prove a key topological result in Lemma \ref{lemm.Annular.composition} allowing us to control the topology of the “intermediate regions”, 
then combined the curvature estimates in \S \ref{sec.geo} we can prove Theorem \ref{theo.finite.top} by an induction argument on the total curvature.
\end{itemize}

\section{Notation and Preliminaries } \label{sec.pre}

\subsection{Definitions and basic notation}
For $n \geq 2, m\geq 1\,,$ given vectors $p,q \in \RR^{n+m}\,,$ let $\ang{p,q}$ denote the standard inner product between  vectors $p$ and $q\,.$
let $M$ be an $n$-dimensional complete smooth Riemannian manifold with boundary(possibly empty), and  $\iota: M \to \RR^{n+m}$ be the smooth isometric immersion($\iota|_{\de M}$ is also smooth). Here, the completeness of $M$ means that every geodesic from a point $p\in M \setminus \de M$ is defined until meeting some point in $\de M\,.$ At below, we define all kinds of notation on $M$ while they are defined only on $M\setminus \de M\,.$ We use the
notation $\fX(M)$ to denote the set of all smooth vector fields on $M\,.$ Let $\d$ and $\db$ be the Levi-Civita connections on $M$ and $\RR^{n+m}\,,$ i.e., $\d_X Y: = (\db_X Y)^T$ for $ X\,,Y \in \fX(M)$(we may identity $X$ and $d\iota(X)$ for $X\in \fX(M)$ since the differential calculations are local), where $(\cdots)^T$ denotes the projection onto the tangent bundle $TM$(see \cite{xin} for instance). In particular, $\d$ is induced from $\bar \d$ naturally. Let $\{e_i\}$ be a local orthonormal frame on $M\,,$ and $Y\in \fX(\RR^{n+m})\,,$ then $$\div_M Y := \sum_{i=1}^n\ang{\db_{e_i}Y, e_i}\,.$$ The second fundamental form $A$ of $M$ is defined by $$A(X,Y) := \db_XY - \d_XY = (\db_XY)^N$$ for vector fields $X,Y \in \fX(M)\,,$ where $(\cdots)^N$ denotes the projection onto the normal bundle $NM.$ Then we denote $\abs{A}^2$ as the square norm of $A\,$, i.e., $\abs{A}^2 = \sum_{i,j=1}^n \abs{A(e_i,e_j)}^2\,.$  Let $H$ denote the mean curvature vector of $M$ in $\RR^{n+m}$ defined by the trace of $A$, i.e., $H =\sum_{i=1}^n A(e_i , e_i )\,$, which is a normal vector field on $M$.  If $H \equiv 0$ on $M\,,$ then $M$ is  a complete  immersed minimal submanifold in $\RR^{n+m}$ with boundary. 

  Given $ \lambda >0\,,$ and $ q \in \RR^{n+m}\,,$ after rescaling in $\RR^{n+m}\,,$  we get a new immersion $\hat \iota: M \to \RR^{n+m}\,, \  \hat \iota(x) = \lambda(\iota(x)-q)$  for all $x\in M\,.$ We denote $\hat M : =\lambda(\iota(M)-q)$ as the new immersed submanifold.
  Since we can pull back the Riemannian metric from $\RR^{n+m}$ to $\hat  M\,.$ We see that $\hat  M$ is also a complete immersed minimal submanifold in $\RR^{n+m}$ with boundary.  At below, given $q \in  \RR^{n+m}\,,$ we denote $\abs{A_{M}}^2(q)$ as the square norm of the second fundamental form of some point in $M$ whose image is $q \,,$ and this point is concrete from the context.

 For points $p,q \in \RR^{n+m}\,,$ let $|p-q|$ be the Euclidean distance between points $p$ and $q\,.$ Given $r>0\,,$ we denote $B_r(q) = \{p \in \RR^{n+m}| |p-q| <r\}\,,$ and $\bar{B_r(q)}$ is the closure of $B_r(p)$ in $\RR^{n+m}\,,$ i.e., $\bar{B_r(q)} = \{p \in \RR^{n+m}| |p-q| \leq r\}\,.$ For subset $U\subset \RR^{n+m}$ and $r>0\, ,$  let $$B_r(U) := \bigcup_{p\in U} B_{r}(p)\, . $$  For $x,y \in M\,,$ let $d_M(x,y)$ be the (Riemannian) distance between $x$ and $y$ on $M.$ Then  $B^M_r(x) = \{y \in M| d_M(x,y) <r\}\,,$ and the closure of $B^M_r(x)$ in $M$ is  $\bar{B^M_r(x)} = \{y \in M| d_M(x,y) \leq r\}\,.$ Abusing notation slightly, we denote $M\cap B_R(p)$ as $M\cap \iota^{-1}(B_R(p))$ and denote $\vol(B_R(p))$ as $\vol(M\cap B_R(p))\,.$ 

Given a set $G$ of finite elements, we denote $\abs G$ as the number of elements in the set $G\,.$ Given subsets $U\, , V \subset \RR^{n+m},$ we denote $d_{\cH}(U,V)$ as the Hausdorff distance between $U$ and $V, $
i.e., $$d_{\cH}(U,V) = \inf \{\varepsilon >0| V \subset B_\eps(U) \text{ and } U\subset B_{\eps}(V)\}\, . $$

 In \cite{Chodosh17}, they defined \emph{Smooth blow-up sets} for embedded minimal hypersurfaces. Here, we define a similar concept for immersed minimal submanifolds in $\RR^{n+m}\,.$  Suppose that $M_{j}$ is a sequence of complete immersed minimal submanifolds with boundary(possibly empty) in $\RR^{n+m}$. A sequence of  subsets $\cB_{j} \subset M_{j}$ with $|\cB_j|< \infty$  is said to be \emph{a sequence of smooth blow-up sets} if:
	\begin{enumerate}[itemsep=5pt, topsep=5pt]
		\item The set $\iota_j(\cB_{j})$ remains a finite distance from the base point $0 \in \RR^{n+m}\,$, i.e.,
		$$
		\limsup_{j\to\infty}\max_{p\in \cB_{j}} |\iota_j(p)| < \infty\, .
		$$
		\item If we set $\lambda_{j}(p) : = |A_{M_{j}}|(p)$ for $p \in \cB_{j}\,$, then the curvature of $M_{j}$ blows up at each point in $\cB_{j}\,$, i.e.,
		$$
		\liminf_{j\to\infty} \min_{p\in\cB_{j}} \lambda_{j}(p) = \infty\, .
		$$
		\item If we choose a sequence of points $p_{j}\in \cB_{j}$, then after passing to a subsequence, the rescaled submanifold $\tilde M_{j}:=\lambda_{j}(p_{j})(\iota_j(M_{j}) - \iota_j(p_{j}))$ converges locally smoothly to a complete, non-flat, immersed minimal submanifold $\tilde M_{\infty}\subset\RR^{n+m}$ without boundary, satisfying
		$$
		|A_{\tilde M_{\infty}}|(x) \leq |A_{\tilde M_{\infty}}|(0)\footnote{$|A_{\tilde M_\infty}(0)|$ is the value of some point in $\tilde M_\infty$ with image $0\in \RR^{n+m}\,.$},
		$$
		for all $x\in \tilde M_{\infty}\,$. 
		\item The blow-up points do not appear in the blow-up limit of the other points, i.e.,
		\[
		\liminf_{j\to\infty}\min_{\substack{p,q \in \cB_{j}\\ p\not=q}} \lambda_{j}(p) |\iota_j(p)-\iota_j(q)| = \infty\, .
		\]
	\end{enumerate}

\subsection{Curvature estimates.}
Choi and Schoen \cite{Choi1} proved curvature estimates under small total curvature condition for minimal surfaces. Furthermore, Anderson \cite{anderson1984compactification} proved curvature estimates under small total curvature condition for $n$-dimensional minimal submanifolds in $\RR^{n+m}\,.$ 
Here, we state one slightly different from Anderson's result \cite{anderson1984compactification} as follows.
\begin{lemm}[Curvature estimates in the extrinsic distance] 
\label{curvature.estimate1}
   For fixed $n\,,  m \in \ZZ^+\,, n\geq 2\,,m\geq 1\,,$ there exists  $C_1\, ,S_{n,m}> 0$ depending on $n\,,m$ such that if $M^n(\iota :M^n \to \RR^{n+m})$ is a complete  properly immersed minimal submanifold with nonempty boundary and the total curvature $\int_M \abs{A}^n d\mu_M < S_{n,m}\,,$ then $\abs{A}(x)d(\iota(x),\iota(\de M) )< C_1$ for all $x \in M\,.$
\end{lemm}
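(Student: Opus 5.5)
I will argue by contradiction with a point-picking (blow-up) argument, in the style of Choi--Schoen and Anderson. If the statement fails for some fixed $S_{n,m}$ (chosen below) and every $C_1$, there are complete properly immersed minimal $M_j$ with nonempty boundary, $\int_{M_j}|A|^n<S_{n,m}$, and points $x_j$ with $|A|(x_j)\,d(\iota_j(x_j),\iota_j(\de M_j))\to\infty$. Set $d_j(x)=d(\iota_j(x),\iota_j(\de M_j))$. By a standard maximization, $|A|(x)d_j(x)$ attains its supremum on the relevant region: properness and the blow-up of the weight near $\de M_j$ make it attained, or I first restrict to the compact piece $\iota_j^{-1}(\bar{B_{d_j(x_j)}(\iota_j(x_j))})$ and maximize $|A|(x)\,(d_j(x_j)-|\iota_j(x)-\iota_j(x_j)|)$ there. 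Either way I obtain $y_j$ and $\sigma_j>0$ such that $\lambda_j:=|A|(y_j)$ satisfies $\lambda_j\sigma_j\to\infty$, the extrinsic ball $B_{\sigma_j}(\iota_j(y_j))$ does not meet $\iota_j(\de M_j)$, and $|A|\le 2\lambda_j$ on the part of $M_j$ mapping into $B_{\sigma_j}(\iota_j(y_j))$.

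Next I rescale, setting $\tilde M_j=\lambda_j(\iota_j(M_j)-\iota_j(y_j))$. Then $|A_{\tilde M_j}|(0)=1$, $|A_{\tilde M_j}|\le 2$ on $B_{\lambda_j\sigma_j}(0)$ with $\lambda_j\sigma_j\to\infty$, no boundary appears in any fixed ball, and $\int|A|^n$ is scale invariant, so it stays below $S_{n,m}$. I then take a local limit. Uniform curvature bounds give uniform graphical radii, so on the component through $y_j$ the submanifold is locally a union of $C^{1,\alpha}$ graphs, and elliptic estimates upgrade this to smooth bounds. Passing to a subsequence along intrinsic balls $B^{\tilde M_j}_R(y_j)$, which lie in the extrinsic ball $B_R(0)$ because extrinsic distance is at most intrinsic distance, yields a complete immersed minimal $\tilde M_\infty$ without boundary. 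It satisfies $|A|(0)=1$, $|A|\le 2$, and $\int|A|^n\le S_{n,m}$. Working intrinsically in this way avoids needing volume bounds for the whole sheet count.

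To finish I need a gap theorem: a complete minimal $\tilde M_\infty^n\subset\RR^{n+m}$ with $\int|A|^n<S_{n,m}$ must be flat, which contradicts $|A|(0)=1$. I would prove this directly with a Sobolev/Moser argument. Simons' inequality $\Delta|A|^2\ge 2|\d A|^2-c(n,m)|A|^4$ combined with Kato gives $|A|\Delta|A|\ge \frac{2}{n}|\d|A||^2-c|A|^4$. Using the Michael--Simon Sobolev inequality on the minimal submanifold, multiplying by $|A|^{n-1}\varphi^2$, and absorbing the $|A|^2\cdot|A|^{n}$ term via Hölder together with the smallness of $\|A\|_{L^n}$, I get $\int|A|^{n}\varphi^{2}$-type estimates that decay under a cutoff of radius $R$. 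This step needs care: naively it yields $\|A\|_{L^{2n/(n-2)\cdot n/2}}$ bounded by $R^{-1}$ times quantities, and volume growth is unknown. A cleaner route is to apply Moser iteration on the unit ball, since $|A|\le 2$ there and the Michael--Simon Sobolev inequality holds without volume assumptions. This gives $|A|(0)^n\le C\int_{B_1}|A|^n\le CS_{n,m}$, which is less than $1$ once $S_{n,m}$ is small, and that already contradicts $|A|(0)=1$ without any global gap theorem.

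The main obstacle is this $\epsilon$-regularity step: making the Moser iteration valid in the immersed setting, using the intrinsic component through the base point, Michael--Simon, and $\Delta|A|\ge -c|A|^3$ in a weak sense. The choice of $S_{n,m}$ must be made so that the $|A|^2$ potential is controlled by $\|A\|_{L^n(B_1)}$ small. The earlier blow-up and maximization steps are routine.
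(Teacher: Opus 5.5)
Your argument is correct, but it closes the contradiction by a different mechanism than the paper's proof.

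The point-picking and rescaling are the same as the paper's. Your second option is the right one: maximize $|A|(x)\bigl(d_j(x_j)-|\iota_j(x)-\iota_j(x_j)|\bigr)$ on the compact set $\iota_j^{-1}(\overline{B_{d_j(x_j)}(\iota_j(x_j))})$. Your first option is not justified: $M_j$ may be unbounded, so $|A|d_j$ need not attain its supremum, and the weight vanishes rather than blows up near $\partial M_j$.

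The real difference is in how you negate the statement. You fix $S_{n,m}$ and negate only $C_1$. Your blow-up limit then only satisfies $\int|A|^n\le S_{n,m}$, so you must supply an $\epsilon$-regularity estimate built from Simons' inequality, Kato, the Michael--Simon Sobolev inequality and Moser iteration. The paper instead negates the existence of \emph{both} constants. This yields $M_j$ with $\int_{M_j}|A|^n\to 0$ and $\sup|A|\,d\to\infty$. The $C^2$ local limit through the base point then has $\int|A|^n=0$ by lower semicontinuity, so it is flat near $0$, which contradicts $|A|(0)=1$. That argument is softer and shorter, and it needs no differential inequality for $|A|$.

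Your route buys explicit constants. Suppose you have the mean value inequality $\sup_{B_{1/2}}|A|\le C(n,m)\bigl(\int_{B_1}|A|^n\bigr)^{1/n}$ whenever $|A|\le 2$ on $B_1$. Then the limit $\tilde M_\infty$ is superfluous. Apply the inequality directly at the maximum point $y$, with $\sigma$ equal to half the weight there. If $S_{n,m}<C^{-n}$, this gives $|A|(y)\sigma\le 1$, and hence $|A|(x)\,d(\iota(x),\iota(\partial M))\le 2$ everywhere. That is a direct proof in the style of Choi--Schoen and Anderson.

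Two minor points:
\begin{itemize}
\item The improved Kato constant is irrelevant. Plain Kato gives $\Delta|A|\ge -c|A|^3$ weakly, and $|A|\le 2$ already bounds the potential.
\item The lemma allows $n=2$. In that case Michael--Simon only bounds $\|f\|_{L^2}$ by $\|\nabla f\|_{L^1}$, so run the iteration via $\|g\|_{L^4}^2\le C\|g\|_{L^2}\|\nabla g\|_{L^2}$ rather than through the exponent $2n/(n-2)$.
\end{itemize}
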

\begin{proof}
Let us argue by contradiction. If the lemma is false, then there must have a sequence of complete properly immersed minimal submanifolds $M_j$ satisfying that the total curvature 
$$\alpha_j := \int_{M_j} \abs{A_{M_j}}^n d\mu_{M_j}\to 0\,,$$ but 
    $$\beta_j := \sup_{x\in M_j}\abs{A_{M_j}}(x)d(\iota_j(x),\iota_j(\de M_j)) \to \infty\,.$$
    Then the standard point picking argument(see \cite{Chodosh17} Lemma 2.2) by passing to a subsequence allows us to find $\tilde{q}_j \in M_j$ so that for $\lambda_j := \abs{A_{M_j}}(\tilde{q}_j)\,,$  the rescaled minimal submanifold 
    $$\tilde{M}_j := \lambda_j(\iota_j(M_j) -\iota_j(\tilde{q}_j))$$
    converges locally smoothly in $\RR^{n+m}$ to a complete immersed minimal submanifold $\tilde M_\infty\,.$  Moreover, $\tilde M_\infty$ has no boundary and the total curvature of $\tilde M_\infty$ equals  0. While $\abs{A_{\tilde M_\infty}}(0) = 1,$
    which derives a contradiction.
    
    For the convenience  of readers, we recall the point picking argument used above to construct $\tilde M_{\infty}$. Let $\iota_j: M_j \to \RR^{n+m}$ denote the immersion map. Choose $\tilde p_{j} \in M_{j}$ so that 
$$
|A_{M_{j}}|(\tilde p_{j}) d(\iota_j(\tilde p_{j}),\iota_j(\partial M_{j})) > \fh{1}{2}\beta_{j} \to \infty
$$
and set $r_{j} = |A_{M_{j}}|(\tilde p_{j})^{-\fh{1}{2}} d(\iota_j(\tilde p_{j}),\iota_j(\partial M_{j}))^{\fh{1}{2}}\,$. Then, we choose $\widetilde q_{j} \in M_{j}\cap \iota_j^{-1}(B_{r_{j}}(\iota_j(\widetilde p_{j})))$ so that
\begin{equation} \label{eq:max.choose1}
|A_{M_{j}}|(\widetilde q_{j}) d(\iota_j(\widetilde q_{j}),\partial B_{r_{j}}(\iota_j(\widetilde p_{j}))) = \max_{\iota_j(x) \in B_{r_{j}}(\iota_j(\widetilde p_{j}))}|A_{M_{j}}|(x) d(\iota_j(x),\partial B_{r_{j}}(\widetilde p_{j}))\, .
\end{equation}
Note that the right hand side is at least $\left(|A_{M_{j}}|(\tilde p_{j})d(\iota_j(\tilde p_{j}),\iota_j(\partial M_{j}))\right)^{\frac 12}$  which is tending to infinity. Let $R_{j} = d(\iota_j(\widetilde q_{j}),\partial B_{r_{j}}(\iota_j(\widetilde p_{j})))\, $. Because $d(y,\partial B_{R_{j}}(\iota_j(\widetilde q_{j}))) \leq d(y,\partial B_{r_{j}}(\iota_j(\widetilde p_{j})))$ for any $y \in B_{R_{j}}(\widetilde q_{j})\, $, we find that 
\begin{equation} \label{eq:max.choose2}
|A_{M_{j}}|(\widetilde q_{j}) d(\iota_j(\tilde q_{j}),\partial B_{R_{j}}(\iota_j(\tilde q_{j}))) = \max_{\iota_j(x) \in B_{R_{j}}(\iota_j(\widetilde q_{j}))}|A_{M_{j}}|(x) d(\iota_j(x),\partial B_{R_{j}}(\iota_j(\widetilde q_{j})))\, .    
\end{equation}
Note that $|A_{M_{j}}|(\widetilde q_{j})R_{j} \geq |A_{M_{j}}|(\widetilde p_{j})r_{j} \to \infty\, $. 

As above, we set $\lambda_{j} = |A_{M_{j}}|(\tilde q_{j})\,$. Then, the rescaled submanifold
$$
\tilde M_{j} =\lambda_{j}(\iota_j(M_{j}) - \iota_j(\tilde q_{j}))
$$
with immersion map $\tilde \iota_j $ satisfies
$$
|A_{\tilde M_{j}}|(x) d(\tilde\iota_j(x)\,,\partial B_{\lambda_{j}R_{j}}(0)) \leq \lambda_{j}R_{j}\,,
$$
when $\tilde \iota_j(x) \in B_{\lambda_{j}R_{j}}(0)\,.$ If $x\in\tilde M_{j}$ and $\tilde \iota_j(x)$ lies in a given compact set of $\RR^{n+m}\,$, then
$$
|A_{\tilde M_{j}}|(x) \leq \frac{\lambda_{j} R_{j}}{\lambda_{j}R_{j} - |\tilde \iota_j(x)|} \to 1 = |A_{\tilde M_{j}}|(0) 
$$
as $j\to\infty\,$. Then $d(0,\tilde \iota_j(\partial \tilde M_j)) \to \infty$ due to $\lambda_j R_j \to \infty\,$. After passing to a subsequence, we can take a smooth limit of $\lambda_{j}(\iota_j(M_{j}) - \iota_j(\widetilde q_{j}))$ and find a complete, non-flat, immersed minimal submanifold $\tilde M_{\infty}$ in $\RR^{n+m}$ without boundary.
\end{proof}

At below, for fixed $n\,,m \in \ZZ^+\,, n\geq2\,,m\geq1\,,$ we fix $K_0 =\fh{1}{2} S_{n,m}$ where $S_{n,m}$ is a fixed  positive number satisfying Lemma \ref{curvature.estimate1}.
 \begin{rema} \label{rema.cur.est}
We also have  curvature estimates in the intrinsic distance. But we do not need to assume the immersion is proper. While in the extrinsic case, we assume that the  immersion is  proper to ensure the  maximum can be achieved in \eqref{eq:max.choose1} and \eqref{eq:max.choose2}.
\end{rema}
 \begin{lemm}[Curvature estimates in the intrinsic distance] 
\label{curvature.estimate2}
    For fixed $\delta> 0\,,  n\,,m\in \ZZ^+\,,n\geq 2\,,m \geq 1\,, $ there exists  $\varepsilon_2> 0$  such that if $M^n(\iota :M^n \to \RR^{n+m})$ is a complete connected immersed minimal submanifold in $\RR^{n+m}$  with nonempty boundary and the  total curvature $\int_M \abs{A}^n d\mu_{M}< \eps_2\,,$ then $\abs{A}(x)d^M(x,\de M )< \delta$ for all $x \in M\,.$ 
\end{lemm}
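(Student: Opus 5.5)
We argue by contradiction, following the proof of Lemma \ref{curvature.estimate1} but replacing extrinsic balls by intrinsic ones. Suppose the statement fails for some $\delta>0$. Then there are complete connected immersed minimal submanifolds $M_j$ with nonempty boundary such that
$$\alpha_j:=\int_{M_j}\abs{A_{M_j}}^n d\mu_{M_j}\to 0,$$
and points $\tilde p_j\in M_j$ with $\abs{A_{M_j}}(\tilde p_j)\,d^{M_j}(\tilde p_j,\de M_j)\geq \delta$.

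By scale invariance of $\abs{A}^n d\mu$ and of the product $\abs{A}\,d^M$, we may first rescale so that $d^{M_j}(\tilde p_j,\de M_j)=1$. Set $r_j=1/2$.

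\textbf{Point picking.} Consider the function
$$f_j(x)=\abs{A_{M_j}}(x)\,d^{M_j}\bigl(x,\de B^{M_j}_{r_j}(\tilde p_j)\bigr)\quad\text{on } \bar{B^{M_j}_{r_j}(\tilde p_j)}.$$
This closed intrinsic ball is compact, because $M_j$ is complete in the stated sense and the ball stays at distance at least $1/2$ from $\de M_j$ (Hopf--Rinow type argument). This is exactly why properness is not needed here, as noted in Remark \ref{rema.cur.est}. Moreover, $f_j$ vanishes on the boundary of the ball.

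Choose $\tilde q_j$ to be a maximum point of $f_j$. Then $f_j(\tilde q_j)\geq \delta/2$. Let $\lambda_j=\abs{A_{M_j}}(\tilde q_j)$ and $R_j=d^{M_j}(\tilde q_j,\de B^{M_j}_{r_j}(\tilde p_j))$, so that $\lambda_j R_j\geq\delta/2$. As in \eqref{eq:max.choose2}, the triangle inequality gives
$$\abs{A}(x)\,d(x,\de B^{M_j}_{R_j}(\tilde q_j))\leq \lambda_j R_j\quad\text{on } B^{M_j}_{R_j}(\tilde q_j).$$
After rescaling by $\lambda_j$, we obtain an intrinsic ball of radius $\rho_j=\lambda_jR_j\geq\delta/2$ with $\abs{A}(0)=1$. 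On the concentric ball of radius $\rho_j/2$ we have $\abs{A}\leq 2$, while the total curvature still tends to $0$.

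\textbf{The key difficulty.} Unlike in Lemma \ref{curvature.estimate1}, $\rho_j$ need not tend to infinity, since we only assumed the product is at least $\delta$, not that it is unbounded. So a complete limit is not available. Instead we argue locally on the ball of fixed size $\rho\geq\delta/4$.

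\textbf{Local contradiction.} With $\abs{A}\leq 2$ on $B^{\tilde M_j}_{\delta/4}(0)$, the harmonic radius, or equivalently the size of the graphical neighbourhood over the tangent plane, is bounded below by a constant $c(n)$. So on an intrinsic ball of radius $r_0=\min(\delta/4,c)$, each $\tilde M_j$ is a graph over $T_0\tilde M_j$ with uniformly bounded $C^{k}$ norms. This uses elliptic estimates for the minimal surface system, bootstrapped from $\abs{A}\leq 2$.

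By Arzel\`a--Ascoli we pass to a limit: a smooth minimal graph on $B_{r_0}$ with $\abs{A}(0)=1$ and $\int\abs{A}^n=0$. This is impossible, since $\abs{A}\equiv 0$ there by smooth convergence. Alternatively, and avoiding the limit, one can use the Simons-type inequality $\Delta\abs{A}\geq -c\abs{A}^3$ together with the mean value inequality on a ball where $\abs{A}\leq 2$. This yields
$$1=\abs{A}(0)\leq C(n,\delta)\Bigl(\int_{B_{r_0}}\abs{A}^n\Bigr)^{1/n}\to 0,$$
again a contradiction. The choice of $\eps_2$ thus depends on $\delta,n,m$, as the statement allows.

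The main obstacle is verifying compactness of intrinsic balls in manifolds with boundary and the uniform graphical radius from $\abs{A}\leq 2$; both are standard.
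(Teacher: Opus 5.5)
Your proof is correct, and its skeleton matches the paper's: argue by contradiction, bound $|A|$ on an intrinsic ball of definite size around a point where $|A|=1$, represent that piece as a graph with uniform $C^k$ bounds, and pass to a local graphical limit that has $|A|(0)=1$ but zero total curvature.

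The genuine difference is how the local curvature bound is obtained. The paper picks $\tilde q_j$ with $|A|(\tilde q_j)\,d^{M_j}(\tilde q_j,\de M_j)>C_2$ and normalizes $|A|(\tilde q_j)=1$. It then bounds $|A|$ on $B^{M_j}_{C_2/2}(\tilde q_j)$ by appealing to the intrinsic analogue of Lemma \ref{curvature.estimate1} mentioned in Remark \ref{rema.cur.est}. The paper asserts that estimate by analogy but does not prove it; a proof would need a separate point-picking argument and a blow-up to a complete, non-flat limit with zero total curvature.

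You instead do the point-picking at a fixed scale inside $B^{M_j}_{1/2}(\tilde p_j)$. This gives $|A|\le 2$ on a ball of radius at least $\delta/4$ directly. You never need $\lambda_jR_j\to\infty$ or a complete limit, and you correctly observe that zero total curvature already forces $|A|\equiv 0$ locally.

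As a result, your argument is self-contained. Its only inputs are local Hopf--Rinow, which gives compactness of the intrinsic ball since $\exp_{\tilde p_j}$ is defined on the radius-$1$ ball, and standard graphical/Schauder estimates. Your lemma in fact implies the intrinsic estimate of Remark \ref{rema.cur.est} (take $\delta=1$) rather than depending on it.

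Your Simons-inequality and mean-value variant goes further, replacing the compactness step with an effective bound. The price is a Michael--Simon Sobolev inequality for the Moser iteration.
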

\begin{proof}
We argue by contradiction. If the lemma is false, there must have a sequence of complete immersed minimal submanifolds $M_j$ with the total curvature $$\alpha_j := \int_{M_j} \abs{A_{M_j}}^n  d\mu_{M_j}\to 0\,,$$ but 
$$\beta_j := \sup_{x\in M_j}\abs{A_{M_j}}(x)d^{M_j}(x,\de M_j) \geq 2C_2>0\,.$$
Then we can find $\tilde{q}_j \in M_j$ such that $\abs{A_{M_j}}(\tilde{q}_j)d^{M_j}(\tilde{q}_j,\de M_j) > C_2\,.$ After rescaling, we can assume $\abs{A_{M_j}}(\tilde{q}_j) = 1, \ d^{M_j}(\tilde{q}_j,\de M_j)> C_2 $ and $\iota_j(\tilde{q}_j) = 0\,.$ If $j$ is sufficiently large, then $|A_{M_j}|(x)$ is uniformly bounded for any $x \in B^{M_j}_{\fh{1}{2}C_2}(\tilde{q}_j)$ by  curvature estimates stated in Remark \ref{rema.cur.est} similar to Lemma \ref{curvature.estimate1}. We denote $\hat\iota_j$ as $\iota_j$ restricted on $M_j\cap  B^{M_j}_{\theta C_2}(\tilde{q}_j)$ for some $\theta $ small. By taking $\theta$ sufficiently small, we can assume $\hat \iota_j$ is an embedding and the image of $\hat \iota_j$ in $\RR^{n+m}$ is the graph of some function $u_j\,.$  After passing to a subsequence, we can assume $\hat \iota_j$  converges locally smoothly to a  minimal embedding  $\hat \iota_\infty: \hat M_\infty \to \RR^{n+m}$ whose image  in $\RR^{n+m}$ is also the graph of some function $u_\infty $ and $\iota_j(\tilde q_j) =  \iota_\infty( \tilde q_\infty) = 0 \in \RR^{n+m}\,.$ Hence $\abs{A_{\hat M_\infty}}(\tilde q_\infty) = 1 $ but $$\int_{\hat M_\infty} \abs{A_{\hat M_\infty}}^n d\mu_{M_\infty}= 0\,,$$ which is a contradiction.

\end{proof}


\section{Geometry of Minimal Submanifolds with Finite Total Curvature} \label{sec.geo}

 For fixed $n,m \in \ZZ^+, n\geq 3, m \geq 1\,,$ a complete minimal immersion $\iota:M^n \to \RR^{n+m}$ is said to be \emph{regular at infinity} if there is a compact subset $K \subset  M$ such that $M \setminus K$ consists of $r$
 components $M_1,\cdots ,M_r$
 satisfying that each $\iota(M_i)$
 is the graph of the vector-valued function $Y_i= (Y_{i}^1\,,\cdots, Y_{i}^m)$ defined over the exterior of a bounded region in some $n$-plane $\Pi_i$.
 Moreover, if $x^1\,,\cdots ,x^n$
 are coordinates in $\Pi_i$
 ,  the function $Y_{i}^\ell$ has the
 following asymptotic behavior for $\abs{x}$ large,
 $$Y_{i}^\ell = b_i^\ell + a_i^\ell\abs{x}^{2-n} + \sum_{j=1}^nc_{ij}^\ell x^j\abs{x}^{-n} +O(\abs{x}^{-n})\,, 1\leq \ell \leq m\,, 1\leq i \leq r\,.$$
If $m=1\,,$ the above definition is consistent with the definition of \emph{regular at infinity} as Schoen in \cite{schocen}. From the definition of regular at infinity, $M$ has finite ends and each end is an embedded minimal submanifold in $\RR^{n+m}\,.$ Moreover,  $\vol(M\cap  B_R(0)) \leq \Lambda R^n$ for any $R>0$ with  $\Lambda>0$ equaling the number of ends of $M$ by the monotonicity formula(see \cite{GMT} for more details about monotonicity formula).

Since up to a rotation, every end can be described as a minimal graph over the exterior of a bounded region in $\RR^n\times \{0^m\} \subset \RR^{n+m}\,.$  We have a parametrization for a minimal graph, i.e.,
\begin{gather}
   \begin{aligned} \label{func.minimal.graph}
      \Psi:  \SS^{n-1} &\times  (a,b)  \to \RR^{n+m}\, , 0\leq a < b \leq \infty\,,\\
      &( x,t) \mapsto (e^tx,e^tF(x,t))\,.  
   \end{aligned} 
   \end{gather}
   We have $e^tx \in \RR^{n}\,,$ $\SS^{n-1} \subset \RR^n \times \{0^m\} $ and  $F$ is a smooth vector-valued function defined on a domain of $ \SS^{n-1} \times \RR$ with $F(t,x) \in \RR^m\,.$
Then we compute the minimal surface system(see Appendix \ref{Appendix.A} for more details about calculations) and get
   \begin{gather}
    F_{tt} + nF_t +(n-1)F + \la_{\SS^{n-1}}F +\cQ(F) = 0 \,.\label{eq.minimal.graph}
\end{gather}
The linearized operator of the  equation \eqref{eq.minimal.graph} is 
\begin{gather}
L(F)=F_{tt} + nF_t +(n-1)F + \la_{\SS^{n-1}}F \,.
\end{gather}
Our analysis of solutions of \eqref{eq.minimal.graph} is based on the asymptotic behavior
of elements in the kernel of $L$. Such an element in the kernel can be decomposed as
the sum of terms of  $u(t)\Phi(x) $  with $\Phi (x) \in \RR^{m}\,,$ where $\Phi $  is
a vector-valued eigenfunction of the Laplace operator on $\SS^{n-1}\,.$
The $k^{th}$ eigenvalue of $\la_{\SS^{n-1}}$ on $\SS^{n-1}$ is $-k(k+n-2)$ ($k \in \NN $). So
$(x,t)\mapsto u(t)\Phi(x)$ is in the kernel of $L$ if $u$ satisfies the
following ordinary differential equation for some $k$ :
\begin{gather} \label{asy.ode}
u_{tt}+nu_t+(n-1-k(k+n-2) )u = 0\,.
\end{gather}
Then we solve the equation \eqref{asy.ode}, and get $u(t) = C_{k,\pm }e^{\lambda_{k,\pm}t}$ with
 $\lambda_{k,\pm} =-\fh n 2 \pm (\fh n 2 +k-1)\,.$ So $\lambda_{k,+} = k-1\,, \lambda_{k,-} = -n+1-k\,.$

 Recall a classical definition of weighted norm for vector-valued functions on
$\SS^{n-1} \times  \RR^+$. If $Y$ is a continuous vector-valued function from $\SS^{n-1} \times
\RR^+$  to $\RR^m$ and $\beta\in \RR\,,$ we define its weighted norm
\begin{gather*}
    \|Y\|_{s,\beta}:=\sup \{e^{\beta t}|Y(x,t)|_s| (x,t)\in \SS^{n-1} \times \RR^+\}
\,,s \in \NN\,. \\
|Y(x,t)|_s : = |\d^sY(x,t)|_0+|Y(x,t)|_{s-1} \,, s \in \NN\,, s \geq1\,.\\
|\d^sY(x,t)|_0 : =\norm{\d^s Y(x,t)}_{C^0(\SS^{n-1} \times  \RR^+)}\,, s\in \NN \,.
\end{gather*}
 If $\|Y\|_\beta: = \|Y\|_{0,\beta} < \infty\,$, we will also write
$Y=O(e^{-\beta t})\,$.


\begin{prop}\label{prop.improv}
Let $Y$ be a solution of \eqref{eq.minimal.graph} on $\SS^{n-1} \times \RR^+ $ satisfying
that $|\d Y|_0<\infty\,,$ $\|Y\|_\beta<\infty$ with $\beta>0$
and $-3\beta \neq \lambda_{k,\pm}$ for all $k \geq 0\,$. Then $Y$ can be written
$Y=X+R\,,$ where $\|X\|_\beta <\infty$ satisfying that  $L(X)=0$ and
$\|R\|_{3\beta}<\infty\,$.
\end{prop}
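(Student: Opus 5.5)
The plan is to treat \eqref{eq.minimal.graph} as a perturbation of the linear equation $L(Y) = -\cQ(Y)$. We first show that the nonlinearity decays like $e^{-3\beta t}$. We then solve $L(R) = -\cQ(Y)$ with $\|R\|_{3\beta}<\infty$ by separating variables on $\SS^{n-1}$. Finally we put $X := Y - R$, which lies in $\ker L$ and satisfies $\|X\|_\beta<\infty$ because both $Y$ and $R$ do (recall $3\beta>\beta$).

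\textbf{Step 1: decay of $\cQ(Y)$.} From the computation in Appendix \ref{Appendix.A}, $\cQ(F)$ is a sum of terms at least cubic in $(F, F_t, \d_{\SS^{n-1}} F)$ and their second derivatives. Each term is schematically $F\ast \partial F \ast \partial^2 F$ with coefficients that are smooth in $(F,\partial F)$. The cubic (rather than quadratic) structure comes from the symmetry $F\mapsto -F$ of the minimal surface system for graphs. To use this we need weighted bounds on derivatives. Since $|\d Y|_0<\infty$, the equation is uniformly elliptic on unit cylinders $\SS^{n-1}\times[t-1,t+2]$. Interior Schauder estimates applied on each cylinder then upgrade $\|Y\|_\beta<\infty$ to $\|Y\|_{s,\beta}<\infty$ for every $s$. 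This uses that $Y$ is a small perturbation there, and one may need to discard an initial interval of $t$ and shift. Consequently $\|\cQ(Y)\|_{s,3\beta}<\infty$.

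\textbf{Step 2: solving the linear problem.} Expand $f:=-\cQ(Y) = \sum_k f_k(t)\Phi_k(x)$ in vector-valued spherical harmonics. For each mode we solve
$$u_{tt}+nu_t+(n-1-k(k+n-2))u=f_k,$$
using variation of parameters with the roots $\lambda_{k,\pm}$. The integration limits are chosen according to the sign of $\lambda+3\beta$:
\begin{itemize}
\item for roots with $\lambda_{k,\pm}<-3\beta$, integrate from $0$ to $t$;
\item for roots with $\lambda_{k,\pm}>-3\beta$, integrate from $t$ to $\infty$.
\end{itemize}
The nonresonance hypothesis $-3\beta\neq\lambda_{k,\pm}$ guarantees that each mode obeys $|u_k(t)|\leq C_k e^{-3\beta t}\sup_s e^{3\beta s}|f_k(s)|$. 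Only finitely many $k$ have $\lambda_{k,+}=k-1\leq 3\beta$. For all remaining large $k$ both roots are far from $-3\beta$, and the constant is bounded by $C/(\lambda_{k,+}-\lambda_{k,-})^{2}\sim C/k^2$, which is uniform.

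\textbf{Main obstacle: summing the modes.} Summing the modes with a uniform sup-norm bound is the main difficulty. A pure $L^2$ argument only gives $\sup_t e^{3\beta t}\|R(\cdot,t)\|_{L^2(\SS^{n-1})}<\infty$. We plan to proceed in two stages:
\begin{enumerate}
\item Use the high regularity from Step 1, namely $\|f\|_{s,3\beta}$ for large $s$, which makes $\sum_k k^{2s}|f_k|^2$ decay at rate $e^{-6\beta t}$. Sobolev embedding on $\SS^{n-1}$ then turns the $L^2$ bound into a pointwise bound.
\item Alternatively, obtain the $L^2$ bound first and then apply interior elliptic estimates for $L$ on unit cylinders to get the pointwise bound $\|R\|_{3\beta}<\infty$.
\end{enumerate}

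\textbf{Conclusion.} Set $X = Y-R$. Then $L(X)=L(Y)+\cQ(Y)=0$ by \eqref{eq.minimal.graph}, and $\|X\|_\beta\leq\|Y\|_\beta+\|R\|_\beta<\infty$.
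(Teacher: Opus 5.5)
Your argument is correct, but it organizes the decomposition differently from the paper.

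\textbf{The paper's route.} The paper projects $Y$ itself onto the $\Phi_{k,\alpha}$. It uses an ODE lemma (Mazet) to write each coefficient as $a_{k,\alpha}e^{\lambda_{k,+}t}+b_{k,\alpha}e^{\lambda_{k,-}t}+r_{k,\alpha}(t)$, with $\|r_{k,\alpha}\|_{3\beta}\lesssim\|f_{k,\alpha}\|_{3\beta}$. It then kills the exponentials with rate $>-\beta$ using $\|Y\|_\beta<\infty$. It defines $X$ as the finitely many exponentials with rates in $[-3\beta,-\beta]$, and puts both the $r_{k,\alpha}$ and the homogeneous tail with rates $<-3\beta$ into $R$. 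This forces the paper to bound $a_{k,\alpha},b_{k,\alpha}$ with decay in $k$ via $\|\d^{2a}Y\|_\beta$ and $\|\d^{2a+1}Y\|_\beta$, and then to sum that homogeneous tail.

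\textbf{Your route.} You only construct the $3\beta$-decaying particular solution of $L(R)=-\cQ(Y)$. Your mode-by-mode choice of integration limits is the same ODE input, and your per-mode constant $\bigl(|\lambda_{k,+}+3\beta|\,|\lambda_{k,-}+3\beta|\bigr)^{-1}$ is uniformly bounded. Setting $X:=Y-R$, you get $L(X)=0$ and $\|X\|_\beta<\infty$ for free, so no homogeneous coefficients of $Y$ ever need estimating. Your summation plan (1) is exactly the paper's mechanism: coefficient decay in $k$ from high regularity, combined with multiplicities and $L^\infty$ bounds on the $\Phi_{k,\alpha}$. The one check your route adds is that the series defining $R$ may be differentiated twice term by term, so that $L(R)=-\cQ(Y)$ holds classically. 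This follows from the same rapid decay in $k$, since $u_k'$ and $u_k''$ cost only polynomial factors of $k$. In the paper's route this is automatic, because $R=Y-X$ with $X$ a finite sum.

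\textbf{What you give up.} You lose some structure. Your $X$ is only some $\beta$-decaying element of $\ker L$. The iteration in Theorem \ref{eqival1} uses more: that $X$ is a finite combination of $e^{\lambda_{k,\pm}t}\Phi_{k,\alpha}$ with $\lambda_{k,\pm}\in[-3\beta,-\beta]$. You can recover this with one short extra step:
\begin{itemize}
\item each mode of $X$ is a combination of $e^{\lambda_{k,\pm}t}$;
\item the $e^{\lambda_{k,+}t}$ parts vanish for $k\geq 1$ (since $\lambda_{k,+}\geq 0>-\beta$), as does any part with rate $>-\beta$;
\item the modes with rate $<-3\beta$ can be absorbed into $R$, because their coefficients decay rapidly in $k$ by regularity of $X$ on a single slice.
\end{itemize}
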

\begin{proof}
    The proof is based on the spectral decomposition of vector-valued functions on
$\SS^{n-1}$. 

Since  $|\d Y|_0 <\infty$ and Equation \eqref{eq.minimal.graph} is
uniformly elliptic, the classical elliptic estimates give upper bounds on the
derivatives of $Y$: more precisely, for any $\ell>0\,$, there is a constant $C_\ell'$ independent of $s$
such that for any $s>1\,,$
$$
\|\nabla^\ell Y \|_{C^0(\SS^{n-1}\times [s,s+1])}\leq C_\ell'\|Y\|_{C^0(\SS^{n-1}\times [s-1,s+2])}\,.
$$
This implies that for any $\ell>0\,$, $\|\d^\ell Y \|_\beta<\infty\,$. Since the term
$\cQ(Y)$ in \eqref{eq.minimal.graph} gathers all the nonlinear terms consisting of $Y\,, \d Y\, , \d^2Y$  at least cubic(see Appendix \ref{Appendix.A}), we have
$\|\cQ(Y)\|_{3\beta}<\infty$ and $\|\d^\ell \cQ(Y)\|_{3\beta}<\infty\,$. 

In the preceding section, we have described the spectrum of the Laplace operator
on the
sphere. So let us denote $\lambda_k:=k(k+n-2)$ and $\Phi_{k,\alpha}$ the 
orthonormal basis of the eigenspace of $\la_{\SS^{n-1}}$ associated to $-\lambda_k\,,$ that is $$\dashint_{\SS^{n-1}}\ang{\Phi_{k_1,\alpha_1}\,,\Phi_{k_2,\alpha_2}} d\mu_{\SS^{n-1}} = \delta_{k_1k_2}\delta_{\alpha_1\alpha_2}\,.$$ The
dimension of the eigenspace associated to $-\lambda_k$ is bounded by $c_1
(k^{n-1}+1)m$ with $c_1$ only depending on $n\,.$  Moreover, for $k \geq 1\,,$ we have the following estimates for the
$L^\infty$ norm of the eigenfunctions (see \cite{Sogge1}):
$$
\|\Phi_{k,\alpha}\|_\infty\leq c_2\lambda_k^{\frac{n-2}4}
, c_2>1 \text{ only depending on } n\,.$$

Now let us define
\begin{align*}
g_{k,\alpha}(t)&=\dashint_{\SS^{n-1}} \ang{Y(x,t), \Phi_{k,\alpha}(x)} d\mu_{\SS^{n-1}}
 \,,\\
f_{k,\alpha}(t)&=-\dashint_{\SS^{n-1}} \ang{\cQ(Y)(x,t),
\Phi_{k,\alpha}(x)} d\mu_{\SS^{n-1}}\,.
\end{align*}

 Hence $g_{k,\alpha}$ and $f_{k,\alpha}$ are smooth functions on $\RR^+\,,$
and from \eqref{eq.minimal.graph}, they satisfy
\begin{gather}
g_{k,\alpha}''-(\lambda_{k,+}+\lambda_{k,-})g_{k,\alpha}'+
(\lambda_{k,+}\times\lambda_{k,-})g_{k,\alpha}= f_{k,\alpha}\,.
\end{gather}

Using $\la_{\SS^{n-1}} \Phi_{k,\alpha}=-\lambda_k\Phi_{k,\alpha}\,$, and integration by parts, for $k\geq 1\,,$ we get the
following estimates for $a,b\in\ZZ^+$:
\begin{align*}
|g_{k,\alpha}(s)|&\leq \frac{\sup_{t=s}|\nabla^{2a}Y(x,t)|}
{(1+\lambda_k)^a}\, ,\\
|f_{k,\alpha}(s)|&\leq \frac{\sup_{t=s}|\nabla^{2a}\cQ(Y)(x,t)|}
{(1+\lambda_k)^a}\,.
\end{align*}
Thus we get 
\begin{align*}
\|g_{k,\alpha}\|_\beta&\leq \frac{\|\nabla^{2a}Y\|_\beta}
{(1+\lambda_k)^a} \,,\\
\|f_{k,\alpha}\|_{3\beta}&\leq \frac{\|\d^{2a}\cQ(Y)\|_{3\beta}}
{(1+\lambda_k)^a}\,.
\end{align*}

For $k=0\,,$ 
$$ \|f_{k,\alpha}\|_{3\beta}\leq \| \cQ(Y) \|_{3\beta}\,.$$

From the standard ordinary differential equation theory(see Lemma 10 in Appendix A of \cite{Mazet1}), we can write
$$
g_{k,\alpha}(t)=a_{k,\alpha}e^{t\lambda_{k,+}} +
b_{k,\alpha}e^{t\lambda_{k,-}}+ r_{k,\alpha}(t)
$$
with some estimates on the different terms. First we notice that
$|\lambda_{k,+}-\lambda_{k,-}| = |2k+n -2|$ and $|3\beta+\lambda_{k,\pm}|$ are uniformly
bounded from below from $0$ and
$\frac{(2+|\lambda_{k,+}|^2+|\lambda_{k,-}|^2)^{1/2}}
{|\lambda_{k,+}-\lambda_{k,-}|}$ is uniformly bounded. Hence, for $k\geq 1\,,$ there
is a uniform constant $c_3$ independent of $k$ such that
\begin{align*}
\max(|a_{k,\alpha}|,|b_{k,\alpha}|)&\leq c_3 (
\|g_{k,\alpha}\|_\beta+ \|g'_{k,\alpha}\|_\beta+
\|f_{k,\alpha}\|_{3\beta})\\
&\leq c_3 \frac{\|\d^{2a}Y\|_\beta+ \|\d^{2a+1}Y\|_\beta+
\|\d^{2a}Q(Y)\|_{3\beta}}{(1+\lambda_k)^a}
\end{align*}
and  
\begin{align*}
\|r_{k,\alpha}\|_{3\beta}&\leq c_3 \|f_{k,\alpha}\|_{3\beta}\\
&\leq c_3 \|\cQ(Y)\|_{3\beta}\,.
\end{align*}
For $k=0\,,$ $$\max(|a_{k,\alpha}|,|b_{k,\alpha}|) \leq C'\, , \|r_{k,\alpha}\|_{3\beta} \leq C'\,.$$ 
If $\lambda_{k,+} > -\beta$, $\norm{e^{t\lambda_{k,+}}}_{\beta} = \infty$, so $a_{k,\alpha} = 0\,.$ Also, if $\lambda_{k,-} > - \beta\,$,  $b_{k,\alpha} = 0\,.$ If $\lambda_{k,\pm}\leq -3\beta\,$,
$ \norm{e^{t\lambda_{k,\pm}}}_{3\beta} = 1$.

Finally we have the following equality
\begin{gather} \label{eq.pro.sum}
\begin{aligned}
Y(x,t)&=\sum_{-3\beta\leq \lambda_{k,+}\leq -\beta} a_{k,\alpha}
e^{t\lambda_{k,+}} \Phi_{k,\alpha}(x)
+\sum_{-3\beta\leq \lambda_{k,-} \leq -\beta} b_{k,\alpha} e^{t\lambda_{k,-}}
\Phi_{k,\alpha}(x)\\
&\quad+\sum_{\lambda_{k,+} < -3\beta} a_{k,\alpha}
e^{t\lambda_{k,+}} \Phi_{k,\alpha}(x)
+\sum_{\lambda_{k,-}< -3\beta}b_{k,\alpha} e^{t\lambda_{k,-}}
\Phi_{k,\alpha}(x)\\
&\quad+\sum_{k=0}^\infty r_{k,\alpha}(t) \Phi_{k,\alpha}(x)\, .
\end{aligned}
\end{gather}
First we notice that the first two sums of \eqref{eq.pro.sum} are finite and are elements of the kernel of
$L$, this is the expected function $X$. In fact, we claim that the other sums
converge and have finite $3\beta $-norms. Let $A(x,t)$ be the sum of the term with
$\lambda_{k,-}< -3\beta $ . In
the following computation, we use the expressions of $\lambda_k\,,$
their multiplicities and the $L^\infty$ estimates on $\Phi_{k,\alpha}\,.$
\begin{align*}
\|A\|_{3\beta }&\leq c_2\sum_{\lambda_{k,-}< -3\beta,k \geq 1} |b_{k,\alpha}|
 \lambda_k^{\frac{n-2}4} + \sum_{\lambda_{0,-}<-3 \beta}|b_{0,\alpha}|\\
&\leq c_2c_3\sum_{k\geq1,\alpha} \frac{\|\d^{2a}Y\|_\beta+
\|\d^{2a+1}Y\|_\beta+
\|\d^{2a}\cQ(Y)\|_{3\beta}}{(1+\lambda_k)^a}
\lambda_k^{\frac{n-2}4} + c_1C'm\\
&\leq 2c_1c_2c_3m(\|\nabla^{2a}Y\|_\beta+ \|\d^{2a+1}Y\|_\beta+
\|\d^{2a}\cQ(Y)\|_{3\beta})\sum_{k=1}^\infty \fh{(1+k^{\frac{3n}2})
}{(1+k^2)^a }+c_1C'm\\
&<\infty
\end{align*}
if $a$ is chosen such that $2a-\frac{3n}2\geq 2\,.$ We can prove the other two sums in the claim by the same method and we omit the details.
\end{proof}

  Note that Schoen \cite[Proposition 1]{schocen} have shown that a complete immersed minimal  surface $M^2 \subset \RR^3$ is regular at infinity
 if and only if $M$ has finite total curvature and each end of $M$ is embedded. Furthermore,
    Schoen \cite[Proposition 3]{schocen} showed that if $ n \geq 3$, and $M^n \subset \RR^{n+1}$ is a minimal immersion with
 the property that $ M \setminus K$, for some compact subset $K \subset M$, is an union of $M_1,\cdots,M_r$
 where each image of $M_i$ in $\RR^{n+1}$ is a graph of bounded slope over the exterior of a bounded region in a
 hyperplane $\Pi_i\,,$ then $M$ is regular at infinity.

Moreover, Anderson \cite[Theorem 3.2]{anderson1984compactification} showed that item (1) can imply item (3) in the following Theorem \ref{eqival1} and we resolve it  using a different method. We
 refer  readers to the original papers for more details.

\begin{theo} \label{eqival1}
  For fixed $\ n,m \in \ZZ^+,n\geq 3,m\geq 1\,,$ if $\iota : M^n \to \RR^{n+m} $ is a complete connected  minimal immersion in $\RR^{n+m}$, then  the following statements are equivalent:
    \begin{enumerate}[itemsep=5pt, topsep=5pt]
        \item The total curvature of $M$ is finite.
        \item  $M$ is of finite ends and each end $E$ of $M$ has a tangent cone at infinity as an $n$-plane with multiplicity one, i.e., $|r_i^{-1}\iota(E)| \rightharpoonup |\psi(\RR^n\times \{0^m\} )|$  in the sense of varifolds in $\RR^{n+m} \setminus B_1(0)$ where $ r_i\uparrow \infty$ and $\psi \in SO(n+m). $  
        \item $M$ is regular at infinity. 
    \end{enumerate}
\end{theo}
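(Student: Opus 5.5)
I will prove the cycle of implications (1)$\Rightarrow$(2)$\Rightarrow$(3)$\Rightarrow$(1).

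\emph{(3)$\Rightarrow$(1).} This is a direct computation on each end. On an end written as a graph $Y_i$ over $\Pi_i\setminus B_{R_0}$, the given expansion gives $|\d Y_i|=O(|x|^{1-n})$ and $|\d^2 Y_i|=O(|x|^{-n})$. Hence $|A|=O(|x|^{-n})$ there, and
\[
\int_{M_i}|A|^n\le C\int_{R_0}^\infty r^{-n^2}r^{n-1}\,dr<\infty .
\]
Adding the compact part $K$ gives finite total curvature.

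\emph{(1)$\Rightarrow$(2).} First I get decay of curvature. Outside a large compact set the total curvature is below $S_{n,m}$, so Lemma \ref{curvature.estimate2} (or Lemma \ref{curvature.estimate1}) applied to $M\setminus B^M_{R}(x_0)$ gives $|A|(x)\,d(x)\to 0$ at infinity. Next I get Euclidean volume growth and finitely many ends. Here I would use $\Delta|\iota|^2=2n$ together with the decay $|A|\,|x|\to0$: the radial function $|\iota|$ then has no critical points far out, so each end is properly immersed. Also $|\nabla^M|\iota||\to 1$, so the distance spheres are almost totally geodesic coverings of large spheres. Monotonicity then bounds the density at infinity, and this also controls the number of ends.

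For the blow-down, rescale $r_i^{-1}\iota(E)$. The scale-invariant bound $|A|\,|x|\to0$ gives smooth convergence in $\RR^{n+m}\setminus B_1$ to a union of flat pieces. Since the total curvature is finite, the Gauss map oscillation on annuli $\{r\le|x|\le 2r\}$ tends to $0$, so the limit is a cone with constant tangent plane, that is, a plane. For $n\ge3$ the link $\SS^{n-1}$ is simply connected, so the covering of the sphere by the link of $E$ is trivial. Hence each end converges with multiplicity one.

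\emph{(2)$\Rightarrow$(3).} Multiplicity-one convergence to a plane, combined with Allard's regularity theorem on annuli, writes each end as a graph over $\Pi_i$ with small slope. In the parametrization \eqref{func.minimal.graph}, this gives $F$ on $\SS^{n-1}\times[T,\infty)$ with $|F|_1\to0$.

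The main step, and the obstacle I expect, is to upgrade this sublinear decay to an actual rate $\|F\|_\beta<\infty$ for some $\beta>0$. I would first try the linear theory: $F$ is small in $C^{2,\alpha}$ and solves $LF=-\cQ(F)$ with $\cQ$ cubic. The kernel exponents $\lambda_{k,\pm}$ have a spectral gap around $0$: apart from $\lambda_{1,+}=0$, $\lambda_{0,+}=-1$, the nearest are $\lambda_{2,+}=1$ and $\lambda_{0,-}=1-n$. I would run a three-annuli/\L{}ojasiewicz-type argument, or a mode-by-mode ODE comparison as in the proof of Proposition \ref{prop.improv}, to exclude a slowly varying $k=1$ mode. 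Such a mode corresponds to a rotation of the plane, which is ruled out by the uniqueness of the tangent plane. This gives exponential decay in $t$.

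Once the rate is in hand, I iterate Proposition \ref{prop.improv}, perturbing $\beta$ to avoid resonances. Each application strips off kernel terms $e^{\lambda t}\Phi$ and triples the decay rate of the remainder. Back in $x$-coordinates, the surviving terms are $e^tF$ with $\lambda=\lambda_{0,+}$, $\lambda_{0,-}$, $\lambda_{1,-}$. These produce the constant $b$, the term $a|x|^{2-n}$ and the term $c_jx^j|x|^{-n}$ respectively. The remainder is $O(|x|^{-n})$ once the iterated rate exceeds $n$, which gives the asymptotic expansion in the definition of regular at infinity.
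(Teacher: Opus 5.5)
Your (3)$\Rightarrow$(1) computation and the last stage of (2)$\Rightarrow$(3) agree with the paper. That last stage is iterating Proposition~\ref{prop.improv} and reading off $b$, $a|x|^{2-n}$ and $c_jx^j|x|^{-n}$. However, the step you flag as the main obstacle is a genuine gap, and the reason you give for closing it does not work.

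\textbf{The fixed plane is not available from (2).} Hypothesis (2) only gives multiplicity-one convergence to a plane along \emph{one} sequence $r_i\uparrow\infty$. Allard's theorem then makes $E$ a small graph on the annuli at the scales $r_i$. After a density-at-infinity argument, it is also a small graph over \emph{some} plane $P_r$ at every large scale $r$, but not over a single fixed $\Pi_i$. Writing $F$ on $\SS^{n-1}\times[T,\infty)$ with $|F|_1\to 0$ therefore already assumes that the tangent plane at infinity is unique.

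\textbf{Uniqueness would not exclude the slow $k=1$ mode anyway.} A tilt $c(t)\Phi_1$ with $c(t)\to 0$ is fully compatible with a unique limit plane. Size bounds on the cubic term $\cQ$ alone only give $|c(t)|\lesssim\int_t^\infty|c|^3$. That is consistent with $c(t)\sim t^{-1/2}$, as the model equation $c''+nc'=-c^3$ shows.

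\textbf{What actually forces a rate is integrability of the neutral mode $\lambda_{1,+}=0$.} These Jacobi fields integrate to exact rotated planes, so the nonlinearity vanishes on $t$-independent $k=1$ data. One can then re-choose the plane scale by scale with summable corrections. This is exactly the content of the Allard--Almgren theorem the paper invokes. Its hypothesis is checked via Simons' result that all Jacobi fields of the totally geodesic $\SS^{n-1}\subset\SS^{n+m-1}$ are Killing. Together with Simon's Theorem 6.6, this gives uniqueness of the tangent plane and $\|F\|_{2,\beta}<\infty$ at once. You need either this citation or a real proof that uses integrability; a \L{}ojasiewicz--Simon argument also works, with integrability supplying exponent $\frac12$ and hence a power rate. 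Appealing to uniqueness of the tangent plane is circular and, as above, insufficient.

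Two smaller points. First, in the iteration you must translate by $b$ before continuing, as the paper does. While the $e^{-t}b$ term is present, $\|F\|_\beta=\infty$ for $\beta>1$. Proposition~\ref{prop.improv} then cannot push the remainder past rate about $3$, short of the $n+1$ you need. Second, in (1)$\Rightarrow$(2), absence of critical points of $|\iota|$ does not by itself give properness, and monotonicity only bounds density from below. The paper gets properness and $|\d f|\geq\frac12$ far out by integrating $V\ang{V,X}\geq\frac34$ along minimizing geodesics. Finiteness of the ends then comes from compactness of a level set of $f$ via Morse theory.
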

\begin{proof}
Fix a point $p \in M\,,$ up to a translation, we can assume $\iota(p) = 0\in \RR^{n+m}\,.$

    $(1)\Rightarrow(2):$ This has been proved in \cite[Theorem 3.1]{anderson1984compactification}. We have organized his proof as follows. We firstly prove that $\iota$ is a  proper immersion. Since the restriction of coordinate functions on $M$ is harmonic  on $M\,,$  $M$ is not a closed manifold. Let $f(x)$ denote the function $|\iota(x)|$ and $X = \iota(x)$ as the position vector. By Lemma \ref{curvature.estimate2}, we can choose $R_0$ large such that for any $q \in M$ if  $d^M(q,p) \geq R_0\,,$ then $\abs{A}(q)d^M(q,p)< \fh{1}{4}\,.$ Let $t_0 := d^M(q,p)$ and $\gamma(t)$ be the minimizing normal geodesic in $M$ between $p$ and $q$ with $\gamma(0) = p$ and $\gamma(t_0) =q\,.$ Let $V = \gamma'(t)\,,$ and if $t \geq R_0\,,$ then we have   
    $$V\ang{V,X} = \ang{A(V,V),X} + 1\geq 1-\abs{A}|X| \geq \fh{3}{4}\,.$$   
    At the point $q\,,$ 
\begin{gather}
    \begin{aligned}
   f(q)  &\geq \ang{X,V}(t_0)\\
  & = \ang{X,V}(R_0) +\int_{R_0}^{t_0} V\ang{V,X}(t) dt\\
  & \geq \ang{X,V}(R_0) + \fh{3}{4}(t_0 - R_0)    . 
   \end{aligned}
    \end{gather}                     So $\iota$ is a proper immersion. 
    We compute $\abs{\d f}$ at $\gamma (t_0) = q\,,$
\begin{gather}
    \begin{aligned}
        \abs{\d f}(q) &\geq \fh{\ang{X,V}(t_0)}{f(q)}\\ &\geq \fh{\ang{X,V}(t_0)}{t_0}\\
        &\geq \fh{\ang{X,V}(R_0)-\fh{3}{4}R_0}{t_0} + \fh{3}{4}\\
        &\geq -\fh{2R_0}{t_0} + \fh34 .
    \end{aligned}
\end{gather}  
If $t_0 \geq 8 R_0\,,$ then $\fh 12\leq \abs{\d f}(t_0) \leq 1\,.$ By the elementary Morse theory, $M \setminus B^M_{8R_0}(p)$ is diffeomorphic to $\left(M \cap \de B^M_{8R_0}(p) \right)\times [0,\infty)\,.$ Since $\iota$ is proper, $M \cap \de B^M_{8R_0}(p)$ is the union of finite $(n-1)$-dimensional closed manifolds. So $M$ is of finite ends. Fix an end $E$ of $M\,,$ and by the curvature estimate in Lemma \ref{curvature.estimate2},  $r_i^{-1}\iota(E)$ converges locally smoothly  in $\RR^{n+m} \setminus \{0\}$ to an $n$-plane passing $0\in \RR^{n+m}$ as $ r_i\uparrow \infty\,.$ Since $n\geq 3$ and $\SS^{n-1}$ is simply connected,  the multiplicity of $n$-plane is 1. So in the sense of varifolds, $|r_i^{-1}\iota(E)| \weakto |\psi(\RR^n\times \{0^m\} )|$ in $\RR^{n+m} \setminus B_1(0)$ where   $\psi \in SO(n+m). $

$(2)\Rightarrow(3):$ By using a result of Allard and Almgren \cite{allard.radial} and Simon \cite[p273, Theorem 6.6]{leon.isolated},
 outside a compact set, each end  $E$ up to a rotation can be
described as the graph of a vector-valued function over the exterior of a bounded region in $\RR^n\times \{0^m\} \subset \RR^{n+m}$  as \eqref{func.minimal.graph} and
$F$ is defined on $\SS^{n-1} \times [t_1,+\infty) $ satisfying  $\|F\|_{2,\beta}<\infty$ for some $\beta>0$. The result of Allard and
Almgren can be applied since all Jacobi functions of the totally geodesic submanifold $\SS^{n-1} \subset \SS^{n+m-1}$ are Killing vector fields of $\SS^{n-1}$ (see Theorem 5.1.1. in \cite{simons}). Decreasing slightly $\beta$
if necessary, we can assume that $-3\beta \neq \lambda_{k,\pm}$ and apply
Proposition \ref{prop.improv}. So $F = X+R\,,$ where $X$ is  in the kernel of $L$ with
decay between $-\beta$ and $-3\beta$ and $\|R\|_{3\beta}<\infty\,$. If there are no
elements in the kernel of $L$ with decay between $-\beta$ and $-3\beta\,$, we get
$\|F\|_{3\beta}<\infty\,$; in that case we have improved the decay of $F\,$.
So we can iterate this argument until we get the first non-vanishing element in
the kernel. The first decay of elements in the kernel is given by 
$\lambda_{0,+}=-1\,$. Let $\Phi_{k} :\SS^{n-1} \mapsto \RR^m$ denote the  eigenfunction of $\la_{\SS^{n-1}}$ with eigenvalue $ -k(k+n-2)\,,$ and $\Phi_k = (\Phi_k^1,\dots,\Phi_k^m)\,.$ Then
$\Phi_{0}$  is the constant vector-valued function and $F$ can
be written
$$
F(x,t)=e^{-t}b +
R(x,t)\,, \text{ for some } b \in \RR^m\,,
$$ 
with $\|R\|_{1+\eps}<\infty$ for some $\eps>0\,$.
The first term can be interpreted as a translation.  So the translated submanifold
$\iota(E)-b$ can be expressed as the graph of a vector-valued function $G$ over the exterior of a bounded region in
$\RR^{n} \times \{0^m\}$ with the  estimate $\|G\|_{1+\eps}<\infty\,.$

Then, we study the asymptotic behavior of $\iota(E)-b\,.$
By Proposition \ref{prop.improv},
we get the first non-vanishing element in
the kernel. Then the first decay of elements in the kernel is given by
$\lambda_{0,-}=-n+1\,.$ Furthermore, $\lambda_{1,-} = -n,\lambda_{2,-} = -(n+1)$ and $\Phi_{1}^\ell(x) = \sum_{j=1}^nc_{j}^\ell x^j,1\leq \ell\leq m\,.$ Since $n\geq3\,,$  $3(-n+1) <-(n+1)\,.$ By Proposition \ref{prop.improv},
$$G(x,t) = e^{-(n-1)t}a + e^{-nt}\Phi_1(x) + O(e^{-(n+1)t})\, , \text{ for some } a \in \RR^m\,.$$

We have parametrization $$e^tF(x,t) = b+e^{-(n-2)t}a + e^{-(n-1)t}\Phi_1(x) + O(e^{-nt})\,.$$
If $x^1,\cdots ,x^n$
 are coordinates in $\RR^n\times \{0^m\}\,,$ then the graph function $Y=(Y^1,\cdots,Y^m)$ has the
 following asymptotic behavior for $\abs{x}$ large: $$Y^\ell(x)= b^\ell + a^\ell\abs{x}^{2-n} + \sum_{j=1}^nc_{j}^\ell x^j\abs{x}^{-n} +O(\abs{x}^{-n})\, , 1\leq \ell \leq m\,.$$

$(3)\Rightarrow(1):$ If the immersed submanifold $M$ is regular at infinity, then $M$ is of finite embedded ends. So we only need to show that the total curvature of each end is finite. However, this is obvious from the asymptotic behavior.
\end{proof}



 It is well known that if the total curvature is sufficiently small then $M$ must be a plane. See \cite{Moore,Ni} for related results. We include a proof here for the convenience of readers.
\begin{coro} \label{Bernstein1}
     For fixed $\ n,m \in \ZZ^+,n\geq 3,m\geq 1\,,$ if $\iota : M^n \to \RR^{n+m} $  is a complete connected minimal immersion with $\int_{M} \abs{A}^n d\mu_{M} < 2K_0\,,$ then $M $ is a flat $n$-plane in $\RR^{n+m}\,.$
 \end{coro}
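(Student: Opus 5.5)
The plan is to combine the finiteness results of Theorem \ref{eqival1} with the extrinsic curvature estimate of Lemma \ref{curvature.estimate1}, applied on larger and larger balls. Since $\int_M |A|^n d\mu_M < 2K_0 = S_{n,m} < \infty$, Theorem \ref{eqival1} says $M$ has finite total curvature. In particular, the proof of $(1)\Rightarrow(2)$ shows that $\iota$ is a proper immersion, and that $M$ has finitely many ends, each regular at infinity.

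Fix a point $x\in M$. For each $R>0$, let $M_R$ be the connected component containing $x$ of $\iota^{-1}(\bar{B_R(\iota(x))})$. Since $\iota$ is proper, $M_R$ is compact. After perturbing $R$ slightly so that $\iota$ is transverse to $\partial B_R(\iota(x))$, the component $M_R$ is a smooth complete properly immersed minimal submanifold with boundary. Its boundary is nonempty, since a closed minimal submanifold of Euclidean space cannot exist (coordinate functions are harmonic). Moreover $\iota(\partial M_R)\subset \partial B_R(\iota(x))$, so $d(\iota(x),\iota(\partial M_R)) = R$.

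The total curvature of $M_R$ is at most that of $M$, hence less than $S_{n,m}$. Lemma \ref{curvature.estimate1} then gives $|A|(x)\, R < C_1$. Letting $R\to\infty$ (along admissible values) yields $|A|(x)=0$. Since $x$ was arbitrary, $A\equiv 0$. A connected complete immersed submanifold with vanishing second fundamental form is totally geodesic, so its image is an open subset of an affine $n$-plane. By completeness it is the whole $n$-plane. The immersion is then a covering map onto $\RR^n$, hence a diffeomorphism since $\RR^n$ is simply connected.

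The main point needing care is verifying the hypotheses of Lemma \ref{curvature.estimate1}: properness of $\iota$ and a smooth nonempty boundary. Properness is exactly what Theorem \ref{eqival1} supplies, and Sard's theorem applied to $|\iota-\iota(x)|$ supplies regular values $R$ arbitrarily large. An alternative route avoids properness: use Lemma \ref{curvature.estimate2} on intrinsic balls $\bar{B^M_R(x)}$ with arbitrarily small $\delta$. This requires only $\int |A|^n < \eps_2(\delta)$ rather than $2K_0$, so it is weaker, and the extrinsic route above is preferred.
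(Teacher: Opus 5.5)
Your proposal is correct and follows essentially the same route as the paper. The paper also uses Theorem \ref{eqival1} to get properness, picks radii $R_j\uparrow\infty$ with $\partial B_{R_j}$ transverse to $M$, and applies Lemma \ref{curvature.estimate1} (with $2K_0=S_{n,m}$) to get $|A|(x)R_j<C_1$, hence $A\equiv 0$. You additionally spell out details the paper leaves implicit: that the boundary is nonempty, the restriction to the component containing $x$, and the covering argument that upgrades $A\equiv 0$ to $M$ being a flat $n$-plane.
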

\begin{proof}
By Theorem \ref{eqival1}, $M$ is regular at infinity  and $\iota $ is a proper immersion.
So for any fixed $p \in M\,,$ there exists $R_j \uparrow \infty$ such that $\de B_{R_j}(p)$ intersects $M$ transversely. Then we have $\abs{A_M}(p) = 0$ by  curvature estimates in Lemma \ref{curvature.estimate1}, which implies $M$ is a flat $n$-plane in $\RR^{n+m}$.
\end{proof}

Since $\int_M \abs{A}^n d\mu_{M}< \infty\,,$ we can associate a Radon measure $\nu$ on $\RR^{n+m}$ by letting $$\nu(U) = \int_{\iota^{-1}(U)\cap M} \abs{A}^n d\mu_{M}$$ for any open set $U \subset \RR^{n+m}\,.$ As Lemma 2.2 of \cite{Chodosh17}, we show that a sequence of complete immersed minimal submanifolds with uniformly bounded total curvature and volume ratio will have curvature estimates away from at most finitely many points. 

\begin{lemm}\label{lemm:curv.est}
For fixed $I \in \ZZ^+,0 <r_0<R_0< \infty\,,$ suppose that $M_j( \iota_j :M_{j} \to  \RR^{n+m})$ is a sequence of $n$-dimensional complete properly immersed minimal submanifolds with nonempty boundary and $\iota_j(M_j) \subset B_{R_0}(0)$ such that $ \int_{M_j} \abs{A_{M_j}}^n d\mu_{M_j} < IK_0 , \limsup \limits_{j\to \infty}\nu_j(B_{R_0}(0)\setminus B_{\fh{r_0}{2}}(0))< K_0$ and $\vol(B_R(q)) \leq \Lambda R^n$ for any $B_R(q) \subset B_{R_0}(0)$. Then, after passing to a subsequence, we have:
\begin{enumerate}[itemsep=5pt, topsep=5pt]
    \item There exist $C>0$ and a sequence of smooth blow-up sets $\cB_{j} \subset M_{j}$  so that 
\begin{equation} \label{eq:seq.cur.esti}
    |A_{M_{j}}|(x)d(\iota_j(x),\iota_j(\cB_{j} \cup \de M_j) ) \leq C\, , \ |\cB_{j}|< I\, , \ \iota_j(\cB_j) \subset B_{\fh{3}{4}r_0}(0)\, ,
\end{equation}
for all $x \in M_{j}\,.$ 
\item  $\iota_j(\cB_j)$ converges to $ \widetilde  \cB_ \infty \subset \RR^{n+m}$ in the Hausdorff distance sense and  the Radon measure $\nu_j$ converges to $ \nu_\infty$ in the Radon measure sense with $\nu_\infty(p_\infty)\geq 2K_0$ for any $p_\infty \in \widetilde \cB_\infty\,.$
\end{enumerate}
\end{lemm}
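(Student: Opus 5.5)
The plan is to follow the iterative point-picking scheme of \cite[Lemma 2.2]{Chodosh17}, adding blow-up points one at a time. Each new blow-up point will carry at least $2K_0$ of total curvature in the limit, which caps the number of points by the bound $IK_0$.

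\textbf{Base step.} Set $\cB_j^{(0)}=\emptyset$. If $\sup_{x\in M_j}|A_{M_j}|(x)\,d(\iota_j(x),\iota_j(\de M_j))$ is bounded along a subsequence, we stop with $\cB_j=\emptyset$.

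\textbf{Inductive step.} Suppose $\cB_j^{(k)}$ has been constructed and
$$\beta_j^{(k)}:=\sup_{x}|A_{M_j}|(x)\,d\big(\iota_j(x),\iota_j(\cB^{(k)}_j\cup\de M_j)\big)\to\infty .$$
Run the point-picking argument from the proof of Lemma \ref{curvature.estimate1}, replacing $\iota_j(\de M_j)$ by $\iota_j(\cB^{(k)}_j\cup \de M_j)$. Properness of $\iota_j$ and $\iota_j(M_j)\subset B_{R_0}(0)$ guarantee that the maxima in \eqref{eq:max.choose1}--\eqref{eq:max.choose2} are attained. This produces $q_j$ with $\lambda_j=|A_{M_j}|(q_j)$ such that the rescalings $\lambda_j(\iota_j(M_j)-\iota_j(q_j))$ have curvature bounded by $1+o(1)$ on compact sets. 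Moreover, $\lambda_j d(\iota_j(q_j),\iota_j(\cB^{(k)}_j\cup\de M_j))\to\infty$.

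\textbf{Passing to the limit.} The volume bound $\vol(B_R(q))\le\Lambda R^n$ gives local area bounds, so the rescalings subconverge locally smoothly, as immersions, to a complete non-flat minimal $\tilde M_\infty$ without boundary. It satisfies $|A|\le |A|(0)=1$. By lower semicontinuity and scale invariance of $\int|A|^n$, we get $\int_{\tilde M_\infty}|A|^n\le IK_0<\infty$. Since $\tilde M_\infty$ is non-flat, Corollary \ref{Bernstein1} forces $\int_{\tilde M_\infty}|A|^n\ge 2K_0$. Hence for every $\delta>0$, for large $j$ we have $\nu_j(B_{\delta}(\iota_j(q_j)))\ge 2K_0-o(1)$; indeed a radius $R/\lambda_j\to0$ already captures nearly $2K_0$.

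Three consequences follow.
\begin{enumerate}
\item Because $\limsup\nu_j(B_{R_0}\setminus B_{r_0/2})<K_0$, and using $\lambda_j\to\infty$ (the point picking forces $|A|\to\infty$ since distances are bounded by $2R_0$), $\iota_j(q_j)$ lies eventually in $B_{3r_0/4}(0)$ after adjusting. Some care is needed here: a point near $\de M_j$ or near the annulus would violate the $\nu$ bound, since the small ball around $q_j$ stays away from $\de M_j$ in rescaled units.
\item The separation condition (4) in the definition of smooth blow-up sets holds for the new point against the old ones by the above divergence. The reverse inequality, $\lambda_j(p)|p-q_j|\to\infty$ for old $p$, needs an argument: if it failed, $q_j$ would appear at bounded distance in the blow-up limit at $p$, where curvature is bounded by $|A|(0)$. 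This would give $|A|(q_j)\le C\lambda_j(p)$, contradicting $\lambda_j(q_j)\,d(q_j,p)\to\infty$.
\item Selecting the maximizer realizes condition (3), and $\tilde M_\infty$ has $|A|\le|A|(0)$.
\end{enumerate}

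Set $\cB^{(k+1)}_j=\cB^{(k)}_j\cup\{q_j\}$. The balls of radius $o(1)\cdot\lambda_j^{-1}$-scale around the points are eventually disjoint by separation, so $(k+1)\,2K_0-o(1)\le\nu_j(M_j)<IK_0$. Thus $k+1<I/2<I$, and the process terminates with $|\cB_j|<I$ and the bound \eqref{eq:seq.cur.esti} for some $C$.

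\textbf{Part (2).} Pass to a further subsequence so that the finitely many points $\iota_j(\cB_j)\subset B_{3r_0/4}(0)$ converge; this gives Hausdorff convergence to $\tilde\cB_\infty$. Weak-$*$ compactness of the Radon measures $\nu_j$, whose masses are at most $IK_0$, gives $\nu_\infty$. For $p_\infty\in\tilde\cB_\infty$ and each $\delta>0$,
$$\nu_\infty(\bar B_\delta(p_\infty))\ge\limsup_j\nu_j(B_{\delta/2}(\iota_j(q_j)))\ge 2K_0 .$$
Letting $\delta\to0$ gives $\nu_\infty(\{p_\infty\})\ge2K_0$.

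The main obstacle is bookkeeping with immersions rather than embeddings. The smooth limit must be taken as a limit of immersions, with the curvature concentration measured by $\nu_j$ on preimages. Showing that the new point avoids the annulus $B_{R_0}\setminus B_{r_0/2}$ and the boundary, which yields the location $B_{3r_0/4}(0)$, is also delicate; I would handle it using the $K_0$ bound together with Corollary \ref{Bernstein1} applied to the limit.
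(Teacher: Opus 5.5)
Your proof is correct, but it is organized differently from the paper's.

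\textbf{The paper's route.} The paper proves item (1) by induction on $I$, in the style of \cite{Chodosh17}:
\begin{enumerate}
\item It point-picks a single blow-up point $\tilde p_j$ relative to $\de M_j$ only.
\item It uses Corollary \ref{Bernstein1} to find $\hat R$ with more than $K_0$ of $\nu_j$ inside $B_{\hat R/\lambda_j}(\iota_j(\tilde p_j))$.
\item It \emph{excises} that ball, using transversality so that the remainder is a manifold with boundary.
\item It applies the inductive hypothesis to the remainder, whose total curvature is $<(I-1)K_0$.
\item It then needs a separate contradiction argument (the $\Xi_j,\Omega_j$ computation) to carry the curvature estimate for the excised manifold back to $M_j$ near the excised sphere.
\end{enumerate}

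\textbf{Your route.} You never cut. You re-run the point-picking with $\iota_j(\cB^{(k)}_j\cup\de M_j)$ in place of $\iota_j(\de M_j)$. You then bound the number of iterations by packing: $\nu_j$ is a measure, and the concentration balls $B_{R/\lambda_j(p)}(\iota_j(p))$ are disjoint by the two-sided separation you verify, with each carrying at least $2K_0-o(1)$.

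This buys you two things. First, \eqref{eq:seq.cur.esti} is literally your stopping criterion, so the paper's final step disappears. Second, you get the sharper count $|\cB_j|\le I/2$. The paper's route instead reuses the index-based template of \cite{Chodosh17} and only needs more than $K_0$ per point.

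\textbf{Item (2).} Here you also differ. You obtain $\nu_\infty(\{p_\infty\})\ge 2K_0$ directly: lower semicontinuity of $\nu_j$ at the blow-up scale, combined with upper semicontinuity of weak limits on compact sets. The paper instead argues by contradiction using Lemma \ref{curvature.estimate1}.

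\textbf{Immersion bookkeeping.} Your reverse-separation step is cleanest if you use the explicit bound from \eqref{eq:max.choose2}:
\[
|A|(x)\le \frac{\lambda_j(p)\,R_j(p)}{R_j(p)-|\iota_j(x)-\iota_j(p)|}.
\]
This controls every sheet of $M_j$ over $B_{R_j(p)}(\iota_j(p))$, not just the sheet converging to the component of the limit through the origin.

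\textbf{Two small slips.}
\begin{enumerate}
\item The packing gives $(k+1)\,2K_0\le IK_0$, i.e.\ $k+1\le I/2$ rather than $<I/2$. This is still $<I$, so your conclusion stands.
\item The concentration balls have radius $R\lambda_j^{-1}$ with $R$ fixed and large, not $o(1)\lambda_j^{-1}$.
\end{enumerate}
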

\begin{proof}
Firstly, we assume the conclusion in item(1) holds for the fixed $I\,,$ and prove the conclusion in item(2) holds for the same fixed $I\, $.
  Since $\iota_j(\cB_j) \subset B_{\fh{3}{4}r_0}(0) \,,$ after passing to a subsequence, we can assume  $\iota_j(\cB_j)$ converges to $\widetilde \cB_\infty \subset \RR^{n+m}$ in the Hausdorff distance sense, i.e., $d_{\cH}(\iota_j(\cB_j),\widetilde \cB_\infty) \to 0\, $. Since $\nu_j(\RR^{n+m})=\nu_j(B_{R_0}(0)) = \int_{M_j} \abs{A_{M_j}}^n d\mu_{M_j}< IK_0\,,$ after passing to a subsequence, we can assume $\nu_j \to \nu_\infty$ in the Radon measure sense.

  If there exists $p_\infty \in \widetilde \cB_\infty $ with $\nu_\infty(p_\infty)< 2K_0\,,$ then there exists some $\tau_0>0$ small such that $\nu_j(B_{\tau_0}(p_\infty)) <2K_0$ and $\iota_j(p_j) \subset  B_{\fh{\tau_0}{4}}(p_\infty)$ with some $p_j\in \cB_j$ for all $j$ large enough. We denote $\kappa_j : = |A_{M_j}|(p_j)$ and fix $ R_1 > C_1$ where $C_1$ is the constant in Lemma \ref{curvature.estimate1}. By the definition of smooth blow-up sets, after passing to a subsequence, the boundary of $M_j$ in  $B_{\fh{ R_1}{\kappa_j}}(\iota_j(p_j))$ is empty for all $j\,$. Hence $B_{\fh{ R_1}{\kappa_j}}(\iota_j(p_j)) \subset B_{\fh{\tau_0}{2}}(p_\infty)$ and $\nu_j(B_{\fh{ R_1}{\kappa_j}}(\iota_j(p_j)))< 2K_0$ for all $j$ large enough. By Lemma \ref{curvature.estimate1}, $R_1 =\abs{A_{M_j}}(p_j) \fh{ R_1}{\kappa_j} \leq C_1$ for all $j$ large enough, this is a contradiction. So $\nu_\infty(p_\infty)\geq 2K_0$ for any $p_\infty \in \widetilde \cB_\infty\,.$
  
    We will prove \eqref{eq:seq.cur.esti} by induction on $I$. When $I = 1$, the lemma follows the definition of $ K_0$ and Lemma \ref{curvature.estimate1}. Then we assume \eqref{eq:seq.cur.esti} holds for $I-1(I>1)\,.$

   After passing to a subsequence, we may assume that 
$$
\alpha_{j} : = \sup_{x\in M_{j}} |A_{M_{j}}|(x)d(x,\iota_j(\partial M_{j})) \to \infty \, .
$$
If we cannot find such a subsequence, it is easy to see that curvature estimates hold with $\cB_{j} = \emptyset\, $. 

Then  the  standard point picking argument as in Lemma \ref{curvature.estimate1} by passing to a subsequence  allows us to find $\widetilde p_{j}\in M_{j}$ so that $\lambda_{j}:=|A_{M_{j}}|(\widetilde p_{j})\to\infty$ and the rescaled submanifold
$$
\hat M_{j} : = \lambda_{j}(\iota_j(M_{j})- \iota_j(\widetilde p_{j}))
$$
 converges locally smoothly in $\RR^{n+m}$ to a  complete, non-flat, immersed minimal submanifold  $\widehat M_{\infty}$ without boundary.   Moreover $\widehat M_{\infty}$  is of finite total curvature and for all $x \in \widehat M_{\infty}\,,$ 
$$
|A_{\widehat M_{\infty}}|(x) \leq |A_{\widehat M_{\infty}}|(0) = 1\, , \vol(\widehat M_{\infty}\cap B_R(0) ) \leq \Lambda R^n, \text{ for any } R>0\,.$$

Because $\widehat M_{\infty}$ is non-flat,  by Corollary \ref{Bernstein1}, $$\int_{\widehat M_{\infty}}\abs{A_{\widehat M_{\infty}}}^n d\mu_{\widehat M_{\infty}} \geq  2K_0\,.$$ Then there is some radius $ \widehat R > 0$ such that $$\int_{\widehat M_{\infty}\cap B_{\widehat R}(0)} \abs{A_{\widehat M_{\infty}}}^n d\mu_{\widehat M_{\infty}} > \fh{3}{2}K_0\,.$$ By Theorem \ref{eqival1}, $\widehat M_{\infty}$  is regular at infinity. By taking $\widehat R$ larger if necessary, we can assume $\widehat M_{\infty}$ intersects $\partial B_{\widehat{R}}(0)$ transversely and
\begin{equation}\label{eq:infty.cur.est}
|A_{\widehat M_{\infty}}|(x) \leq \fh{1}{4}
\end{equation}
for any $x \in \widehat M_{\infty}\setminus B_{\widehat R}(0)\, $. Then $\int_{M_j \cap B_{\widehat R/\lambda_{j}}(\iota_j(\widetilde p_{j}))} \abs{A_{M_j}}^n > K_0$ for all $j$ large enough while $\limsup \limits_{j\to \infty}\nu_j(B_{R_0}(0)\setminus B_{\fh{r_0}{2}}(0))< K_0\,.$ So after passing to a subsequence, we can assume $\iota_j(\widetilde p_{j}) \in B_{\fh{3}{4}r_0}(0)\,.$ 

We define $\widetilde{M}_{j} : = M_{j} \setminus \iota_j^{-1}\left(B_{\widehat R/\lambda_{j}}(\iota_j(\widetilde p_{j}))\right)\,$. For all $j$ large, due to the choice of $\widetilde p_{j}$ and  $\alpha_{j}\to\infty\,,$ $B_{\widehat R/\lambda_{j}}(\iota_j(\widetilde p_{j})) \cap \de M_j = \emptyset\,.$  Since $\widehat M_{\infty}$ intersects $\partial B_{\widehat{R}}(0)$ transversely,   $M_{j}$ intersects $\partial B_{\widehat R/\lambda_{j}}(\iota_j(\tilde p_{j}))$ transversely.  Thus, $\widetilde{M}_{j}$ is a smooth compact minimal submanifold with smooth, compact boundary \[\partial\widetilde{M}_{j} = \partial M_{j} \cup (\partial B_{\widehat R/\lambda_{j}}(\widetilde \iota_j(p_{j}))\cap M_{j})\, .\]
For all $j$ large, $$ \int_{\widetilde{M}_{j}}\abs{A_{\widetilde{M}_{j}}}^n d\mu_{\widetilde{M}_{j}}< (I-1)K_0\,.$$ By the inductive hypothesis, after passing to a subsequence, there is a sequence of smooth blow-up sets $\widetilde\cB_{j} \subset \widetilde{M}_{j}$ with $|\widetilde\cB_{j}|< I-1\,, \iota_j(\widetilde\cB_{j} ) \subset B_{\fh{3r_0}{4}}(0)$ and a constant $\widetilde C$ (independent of $j$) so that
\begin{equation}\label{eq:curv-est-induct-hyp}
|A_{\widetilde{M}_{j}}|(x)d(\iota_j(x),\iota_j(\widetilde\cB_{j} \cup \partial\widetilde{M}_{j})) \leq \widetilde C
\end{equation}
for all $x\in \widetilde M_j\,.$

We claim that $\cB_{j} : = \widetilde\cB_{j}\cup \{\widetilde p_{j}\}$ is a sequence of smooth blow-up sets for $M_j$. We only need to  check  that none of the points in $\widetilde\cB_{j}$ can appear in the blow-up at $\widetilde p_{j}$ and $\widetilde p_{j}$ cannot appear in the blow-up at any point in $\widetilde\cB_{j}$ (which implies that rescaling  $M_{j}$ around points in $\widetilde\cB_{j}$ still yields a smooth limit). 

We will prove the above claim by contradiction. If 
\begin{gather} \label{lemm.fin.contra}
\liminf_{j\to\infty}\min_{\widetilde q \in \widetilde \cB_{j}} \lambda_{j} |\iota_j(\widetilde q)-\iota_j(\widetilde p_{j})| < \infty\, ,
\end{gather}
then we can assume that the minimum is attained at $\widetilde q_{j}\in\widetilde\cB_{j}$. Since $\widetilde q_{j} \in \widetilde{M}_{j}= M_{j} \setminus \iota_j^{-1}\left( B_{\widehat R/\lambda_{j}}(\iota_j(\widetilde p_{j}))\right)$ and  \eqref{eq:infty.cur.est}, \eqref{lemm.fin.contra} hold,  after passing to a subsequence,
$$
\beta_{j} : = |A_{M_{j}}|(\widetilde q_{j}) \leq \fh{1}{2} |A_{M_{j}}|(\widetilde p_{j}) = \fh{1}{2} \lambda_{j}\, .
$$
Hence, we have
\begin{equation}\label{eq.contra1}
  \liminf_{j\to\infty} \beta_{j}|\iota_j(\widetilde q_{j})-\iota_j(\widetilde p_{j})| < \infty\, . 
\end{equation}

So if the claim is not hold,  \eqref{eq.contra1} must hold.
However,  the blow-up of $\widetilde M_{j}$ around $\widetilde q_{j}$ has no boundary due to the definition of smooth blow-up set, which will contradict with \eqref{eq.contra1}. So $\cB_j$ is a sequence of smooth blow-up sets.

Now, we prove that curvature estimates in \eqref{eq:seq.cur.esti} hold. We argue by contradiction. Suppose that there is $y_{j} \in M_{j}$ such that
\begin{equation} \label{eq:cur.contra}
    \limsup_{j\to\infty}|A_{M_{j}}|(y_{j})d(\iota_j(y_j),\iota_j(\cB_{j}\cup\partial M_{j}))  = \infty\, .
\end{equation}
Combined  \eqref{eq:curv-est-induct-hyp} and the choice of $\widetilde p_{j}$, after passing to a subsequence, we can assume that  $y_{j} \in \widetilde M_{j}$ and
\begin{equation}\label{eq.lem.finite.dist}
\Xi_j : =  d(\iota_j(y_{j}) \,,\iota_j(\cB_{j}\cup\partial M_{j}))  = |\iota_j(y_{j})-\iota_j(\widetilde p_{j})|\, .
\end{equation}
Then, we have
\begin{equation} \label{eq.omega}
\Omega_j := d(\iota_j(y_{j})\, , \iota_j(\widetilde\cB_{j}\cup \partial\widetilde M_{j})) = |\iota_j(y_{j})-\iota_j(\widetilde p_{j})| - \frac{\widehat R}{\lambda_{j}}\,.
\end{equation}
Hence, after passing to a subsequence, we can assume $\Xi_j \to 0\,.$ Otherwise $\Omega_j/\Xi_j \to 1\,,$ then \eqref{eq:cur.contra} will contradict with \eqref{eq:curv-est-induct-hyp}.
Because $\widehat M_{\infty}$ has bounded curvature, so $y_{j}$ cannot appear in the blow-up at $\widetilde p_{j}\,$, i.e.,
\begin{equation} \label{eq.q.blow}
 \liminf_{j\to\infty} \lambda_{j}\Xi_j = \infty\, .   
\end{equation}
Hence, combined  \eqref{eq:curv-est-induct-hyp}, \eqref{eq.omega} and \eqref{eq.q.blow}, we have
\begin{equation} \label{eq.lem.cur.small}
\limsup_{j\to\infty} |A_{M_{j}}|(y_{j}) \frac{\widehat R}{\lambda_{j}} \leq \limsup_{j\to\infty} \frac{\widetilde C \widehat R}{\lambda_{j}d(\iota_j(y_{j}),\iota_j(\widetilde\cB_{j}\cup\partial\widetilde M_{j}))} = 0\, .
\end{equation}
Combined  \eqref{eq:curv-est-induct-hyp}, \eqref{eq.omega}, \eqref{eq.q.blow} and \eqref{eq.lem.cur.small},  we have 
\begin{align*}
\widetilde C &\geq \limsup_{j\to\infty} |A_{M_{j}}|(y_{j})d(\iota_j(y_{j}) ,\iota_j(\widetilde\cB_{j}\cup\partial\widetilde M_{j}))  \\
&= \limsup_{j\to\infty} |A_{M_{j}}|(y_{j}) \left(|\iota_j(y_{j})-\iota_j(\widetilde p_{j})| - \frac{\widehat R}{\lambda_{j}}\right)= \infty\, ,  
\end{align*}
which is a contradiction. So we complete the proof.
\end{proof}


\section{Finiteness of Topology} \label{sec.top}
The following lemma is similar to  Lemma 3.1 in \cite{Chodosh17}, which is crucial for our later arguments. 

\begin{lemm}[Annular decomposition] \label{lemm.Annular.composition}
For fixed $n\,,m\in \ZZ^+\,,n\geq 2\,,$ there is a $0 < \sigma_0 <\fh{1}{2} $ only depending on $n,m$ with the following property. Suppose that $M^n(\iota : M^n \to \bar{B_2(0)} \subset\RR^{n+m})$ is a complete properly immersed submanifold  with $\iota(\de M)  \subset \de B_2(0)\, . $ Assume that for some $\sigma \leq \sigma_0$ and $p \in B_{\sigma_0}(0)\,,$ we have:
\begin{enumerate}[itemsep=5pt, topsep=5pt]
\item For each component $M'$ of $M\,,$ $M' \cap B_{\sigma}(p) \neq \emptyset \,$.
\item The immersed submanifold $M$ intersects $\de B_{\sigma}(p)$ transversely, and $M \cap \de B_{\sigma}(p)$ has $k$ components. Moreover, each component of   $M \cap \de B_{\sigma}(p)$ is diffeomorphic to $\SS^{n-1}$ with the standard smooth structure.
\item The second fundamental form of $M$ satisfies $|A|(x)|\iota(x)-p| \leq \fh{1}{4} $ for all $x \in M \cap \left( \overline{B_{1}(0)} \setminus B_{\sigma}(p) \right)\,.$
\end{enumerate}
Then, $M$ intersects $\de B_{1}(0)$ transversely. Both  $M \cap \left(\overline{B_{1}(0)}\setminus {B_{\sigma}(p)}\right)$ and  $M \cap \de B_{1}(0)$  have $k$ components. Moreover, each component of   $M \cap \de B_{1}(0)$ is diffeomorphic to $\SS^{n-1}$ with the standard smooth structure and
 each component of $M \cap \left(\overline{B_{1}(0)}\setminus {B_{\sigma}(p)}\right)$ is diffeomorphic to $\SS^{n-1}\times [0,1]$ with the standard smooth structure.
\end{lemm}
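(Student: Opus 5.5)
The plan is a Morse-theoretic argument for the distance function $f(x) = |\iota(x)-p|$ on the annular region, following Lemma 3.1 of \cite{Chodosh17}. The key observation is that the curvature bound in item (3) rules out critical points of $f$ on $M\cap(\overline{B_1(0)}\setminus B_\sigma(p))$. It also forces every critical point of $f$ restricted to a sphere to be nondegenerate of the right type, so the level sets cannot change topology.

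\emph{Step 1 (no critical points).} Set $X = \iota(x)-p$. On $M$ we have $\nabla f = X^T/f$ and
\[
\operatorname{Hess}_M(f^2/2)(V,V) = |V|^2 + \langle A(V,V), X\rangle .
\]
By item (3), $|\langle A(V,V),X\rangle| \le |A|\,|X|\,|V|^2 \le \tfrac14|V|^2$, so $\operatorname{Hess}_M(f^2/2)\ge \tfrac34 g$ on the region. I would use this to show that $|X^T|/|X|$ stays bounded below along integral curves.

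The cleanest route is to follow Anderson's argument in the proof of Theorem \ref{eqival1}. Let $\theta(x)$ be the angle between $X$ and $T_xM$, that is, $\cos\theta = |X^T|/|X|$. Differentiating along the flow of $X^T/|X^T|$ and using the Hessian bound gives a differential inequality for $\sin\theta$ in $\log f$. This controls the angle provided it starts small.

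Transversality at $\partial B_\sigma(p)$ by itself only gives $\theta<\pi/2$, not a quantitative bound. So instead I would argue by contradiction. Suppose $f$ restricted to the region has a critical point $x_0$ with $f(x_0)=r\in[\sigma,1+|p|]$, so that $X\perp T_{x_0}M$. Rescale by $r$. Item (3) then gives $|A|\le 1/4$ at unit distance from $p$, and on an intrinsic ball of radius $c$ around $x_0$ the submanifold is a graph with small slope over $T_{x_0}M$ (here I would use $\sigma_0$ small and the curvature bound). On that graph $f$ has a nondegenerate local minimum at $x_0$ with Hessian $\ge \tfrac34$.

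I would rather avoid the contradiction approach and instead use the Hessian bound directly. Since $\operatorname{Hess}(f^2/2)>0$ on the region, $f^2$ restricted to each component is strictly convex along geodesics, so it has no interior local maxima. What remains is to exclude interior minima and saddles. The strategy is to show $\partial_r$-monotonicity via the ODE: along a curve $\gamma$ that is an integral curve of $\nabla f/|\nabla f|^2$ starting on $\partial B_\sigma(p)$, one gets $\tfrac{d}{dr}\,|X^T|^2/|X|^2 \ge (\tfrac32-2)/r\cdot(\dots)$. I expect this step to be the main obstacle. If it resists, I would replace it by a continuity argument in the radius: the set of $r$ for which $M$ meets $\partial B_r(p)$ transversely with all components standard spheres is open (stability of transversality) and closed. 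Closedness holds because a first tangency at radius $r$ would be a point where $X\perp T M$. There the Hessian bound together with item (1), which says each component reaches inside $B_\sigma(p)$, forces $f$ to have a nondegenerate local minimum at that point. That is impossible, because the flow of $-\nabla f$ from nearby points would run back into $B_\sigma(p)$ without crossing $\partial B_r$.

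\emph{Step 2 (topology).} Once $\nabla f\neq 0$ on $M\cap(\overline{B_{R}(p)}\setminus B_\sigma(p))$ for $R$ slightly larger than $1+|p|$, which is possible since $\iota(\partial M)\subset\partial B_2(0)$ and $|p|<\sigma_0$, elementary Morse theory (flowing along $\nabla f/|\nabla f|^2$) shows that each component of the region is diffeomorphic to a component of $M\cap\partial B_\sigma(p)$ times an interval. Item (1) ensures there are no components missing $\partial B_\sigma(p)$, so the count of components is $k$.

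\emph{Step 3 (passing to $\partial B_1(0)$).} Finally I must replace $\partial B_1(0)$-level sets of $f$, since the centers differ. I would use the function $g=|\iota(x)|$ on the thin shell between $\partial B_{1-\sigma_0}(p)$ and $\partial B_{1+\sigma_0}(p)$. Because $|p|<\sigma_0$, the vectors $\iota(x)$ and $\iota(x)-p$ differ in angle by $O(\sigma_0)$. The angle bound from Step 1 ($|X^T|\ge c|X|$ with $c$ uniform, which I would extract from the same argument) then gives $\nabla g\neq0$ there, for $\sigma_0$ small. Hence $M$ is transverse to $\partial B_1(0)$, and $M\cap\partial B_1(0)$ is isotopic along the flow to $M\cap\partial B_{1}(p)$, giving $k$ standard spheres. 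Gluing the two product structures then gives that each component of $M\cap(\overline{B_1(0)}\setminus B_\sigma(p))$ is diffeomorphic to $\SS^{n-1}\times[0,1]$.
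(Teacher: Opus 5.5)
There is a genuine gap in Step~1. You never exclude an interior critical point of $f=|\iota-p|$ in the annular region, and the fallback argument you give is incorrect.

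Your bound $\nabla^2(\tfrac12|\iota-p|^2)\ge\tfrac34 g$ already shows that every critical point is a nondegenerate local minimum, so saddles are not at issue. The whole difficulty is a local minimum $x_0$ at some level $r$. Near $x_0$ one has $f>r$, and the flow of $-\nabla f$ converges to $x_0$; it does not ``run back into $B_\sigma(p)$''. No local argument can rule $x_0$ out.

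The correct mechanism, and the one the paper uses, is global Morse theory. Since every critical point has index $0$, passing a critical level only adds a disjoint ball to the sublevel set. So a local minimum produces a component of $M\cap(\overline{B_1(0)}\setminus B_\sigma(p))$ that is a disk with boundary in $\partial B_1(0)$ and never meets $\partial B_\sigma(p)$.

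Your Step~2 dismisses such components by citing item~(1). But (1) concerns components of $M$, not of this region, and with (3) assumed only on $\overline{B_1(0)}$ it does not forbid them. Take the flat disks $\{x_3=0\}$ and $\{x_3=\tfrac12\}$ in $\overline{B_1(0)}\subset\RR^3$, joined by a neck lying in $B_2(0)\setminus\overline{B_1(0)}$. This satisfies (1)--(3) with $p=0$ and $k=1$, yet meets $\partial B_1(0)$ in two circles. Minimality is not assumed in the lemma, and a catenoid whose neck lies in $B_2(0)\setminus\overline{B_1(0)}$ gives a minimal example.

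So this step cannot be closed from the hypotheses as written. One needs (3) on all of $M\setminus B_\sigma(p)$, which is what holds where the lemma is applied in Theorem~\ref{prop.diffeo}. Then convexity holds on the whole component, the disk would be an entire component of $M$ missing $B_\sigma(p)$, and (1) gives the contradiction. The paper's final step (``$M'$ has only one critical point and nothing below it'') tacitly relies on this global convexity.

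The second problem is your choice of $f$. Its level sets are spheres about $p$, while (3) controls only $\overline{B_1(0)}\setminus B_\sigma(p)$. For $1-|p|<r\le 1+|p|$ the spheres $\partial B_r(p)$ leave $\overline{B_1(0)}$, and so does your shell $B_{1+\sigma_0}(p)\setminus B_{1-\sigma_0}(p)$. Hence Step~2's claim that $\nabla f\neq 0$ up to a radius beyond $1+|p|$ has no support.

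Switching to $|\iota|$ near $\partial B_1(0)$, as in Step~3, requires $|(\iota-p)^T|\ge c\,|\iota-p|$ with $c$ independent of $M$: it must beat $|p|<\sigma_0$, and $\sigma_0$ is fixed in advance. This is never proved. The angle ODE is left open, and a compactness argument would give a constant depending on $M$.

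The paper avoids both issues with the single function $f=|\iota-p|^2\phi+|\iota|^2(1-\phi)$, where $\phi=\eta(|\iota|^2)$. It equals $|\iota-p|^2$ on $\{|\iota|\le\tfrac12\}$ and $|\iota|^2$ on $\{|\iota|\ge\tfrac9{10}\}$, so the region is exactly $\{\sigma^2\le f\le 1\}$, bounded by two level sets. The cutoff terms contribute only $O(|p|)|\xi|^2$, which gives $\nabla^2 f(\xi,\xi)\ge 2(\tfrac34-C|p|)|\xi|^2$ on the whole region. Transversality to $\partial B_1(0)$ and the product structure then follow from one Morse argument, with no angle estimate.
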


\begin{proof}
Choose a smooth cutoff function  $\eta \in C^{\infty}_{c}([0,1))$ so that $\eta(r) \in [0,1]\, $, $\eta(r) = 1$ for $r \leq \frac {1}{4}$ and $\eta(r) = 0$ for $r\geq \fh{81}{100}\,.$  We will take $\sigma_0>0$ sufficiently small based on this fixed cutoff function. Let  $\phi(x):=\eta(|\iota(x)|^{2}).$ Consider the function
$$
f(x) = |\iota(x)-p|^{2}\phi(x) + |\iota(x)|^{2}(1-\phi(x))\, .
$$
We see that $f(x) = |\iota(x)-p|^{2}$ if $|\iota(x)|\leq \fh{1}{2}$  and $f(x) = |\iota(x)|^{2}$ if $|\iota(x)| \geq \fh{9}{10}\,.$ For any point $q \in \RR^{n+m}$ and $\xi \in \fX(M)\,,$ we have
\begin{align} \label{lem.top.1}
\nabla(|\iota(x)-q|^{2}) &  = 2(\iota(x)-q)^T,
\end{align}
\begin{gather} \label{lem.top.2}
\begin{aligned}
\nabla^2 (|\iota(x)-q|^{2}) (\xi,\xi) & =  \xi(\xi(|\iota(x)-q|^{2})) - \d_\xi \xi    (|\iota(x)-q|^{2}) \\ &= \db^2(|\iota(x)-q|^{2})(\xi,\xi) + A(\xi,\xi)|\iota(x)-q|^{2} \\ & = 2\left(\abs{\xi}^2 + \ang{A(\xi,\xi),\iota(x)-q}\right)\, ,
\end{aligned}
\end{gather}
\begin{align} \label{lem.top.3}
\d \phi & = \eta'\d |\iota(x)|^2 = 2 \eta' (\iota(x))^T\, ,
\end{align}
\begin{gather} \label{lem.top.4}
\begin{aligned}
\d^2 \phi(\xi,\xi)  &= \eta' \d^2|\iota(x)|^2(\xi,\xi) +  4\eta''\ang{\iota(x),\xi}^2\\
& = 2\eta'\left(\abs{\xi}^2 + \ang{A(\xi,\xi),\iota(x)}\right)+ 4\eta''\ang{\iota(x),\xi}^2\, .
\end{aligned}
\end{gather}
 Combined  \eqref{lem.top.1}, \eqref{lem.top.2}, \eqref{lem.top.3} and \eqref{lem.top.4}, we compute
\begin{align*}
\d^{2} f (\xi,\xi)  = & \phi\d^2 |\iota(x)-p|^{2}(\xi,\xi) +(1-\phi) \d^2|\iota(x)|^{2}(\xi,\xi)\\
&+ 2\ang{\d \phi ,\xi}\ang{\d(|\iota(x)-p|^2),\xi} -2 \ang{\d \phi ,\xi}\ang{\d(|\iota(x)|^2),\xi}\\
&+ |\iota(x)-p|^{2} \d^2\phi(\xi,\xi)
 - |\iota(x)|^{2}\d^2\phi(\xi,\xi)\\
=&2\phi\left(|\xi|^2+ \ang{A(\xi,\xi),\iota(x)-p}\right)
+ 2(1-\phi)\left(|\xi|^2+ \ang{A(\xi,\xi),\iota(x)}\right)\\
& + 8 \eta'  \ang{\iota(x)-p,\xi}  \ang{\iota(x),\xi}  - 8 \eta'  \ang{\iota(x),\xi}^{2}\\
& + 2\eta'(|\iota(x)-p|^{2}-|\iota(x)|^{2})\left(|\xi|^2+ \ang{A(\xi,\xi),\iota(x)}\right)\\ &
+ 4\eta''\left(|\iota(x)-p|^{2}-|\iota(x)|^{2}\right) \ang{\iota(x),\xi}^{2}\\
 =& 2\left(|\xi|^2 + \ang{A(\xi,\xi),\iota(x)-p}\right)
+2(1-\phi)\ang{A(\xi,\xi),p}\\
& - 8 \eta'  \ang{p,\xi} \bangle{\iota(x),\xi} \\
& + 2\eta'\left(|\iota(x)-p|^{2}-|\iota(x)|^{2}\right)\left(|\xi|^2+\ang{A(\xi,\xi),\iota(x)}\right)\\& + 4\eta''\left(|\iota(x)-p|^{2}-|\iota(x)|^{2}\right)\bangle{\iota(x),\xi}^{2}\, .
\end{align*}
If $|\iota(x)| < \fh{1}{2}\,,$  then $x$ is not in the  supports of $1-\phi\, $, $\eta'(\abs{\iota(x)}^2)$ and $\eta''(\abs{\iota(x)}^2)\, $. Hence, by taking $\sigma_0$ sufficiently small, $|A|(x)\leq \fh{5}{8}$ on the supports of $1-\phi\, $, $\eta'(\abs{\iota(x)}^2)$ and $\eta''(\abs{\iota(x)}^2)\,.$ It is easy to see that on $M \cap \left(\overline{B_{1}(0)}\setminus {B_{\sigma}(p)}\right)\,$,
$$
\d^{2}f(\xi,\xi) \geq 2\left(|\xi|^2+ \ang{A(\xi,\xi),\iota(x)-p}\right) - C\abs{p} |\xi|^{2}\, ,
$$
for some $C>0$ only depending on $|\eta|\, ,|\eta'|$ and $|\eta''|\,.$  Since $|A|(x)|\iota(x)-p| \leq \fh{1}{4} $ for all $x \in M \cap \left( \overline{B_{1}(0)} \setminus B_{\sigma}(p) \right)\,,$  we have that 
$$
\d^{2}f(\xi,\xi)  \geq 2\left(\fh{3}{4} -  C\abs{p}\right) |\xi|^{2}\, .
$$
Thus, as long as $\sigma_0 > \abs{p}$ is sufficiently small, $\d^{2}f$ is strictly positive. 

Choosing such a $\sigma_0$, then any critical point of $f$ in $M \cap \left(\overline{B_{1}(0)}\setminus {B_{\sigma}(p)}\right)$ must be a strict local minimum. Suppose $f$ has critical points in  $M \cap \left(\overline{B_{1}(0)}\setminus {B_{\sigma}(p)}\right)\, $.   Since $M$ is properly immersed,  $f$ has finite  critical points on $M \cap \left(\overline{B_{1}(0)}\setminus {B_{\sigma}(p)}\right)\,.$  
We fix a component $M'$ of $M\,.$ By Morse theory\cite[P156]{Hirsch1},  if $f$ has critical points on $M'$, then  $M'$ must have only one critical point $x'$ and 
 $M'\cap f^{-1}(\{x\in M'|f(x)<f(x')\})  = \emptyset\,.$ Then $M' \cap B_{\sigma}(p) = \emptyset\,,$  which is a contradiction. Hence $f$ cannot have any critical points in  $M \cap \left(\overline{B_{1}(0)}\setminus {B_{\sigma}(p)}\right)\, $. The lemma follows from the standard Morse theory.
\end{proof}

 Chodosh proved removal singularity theorem for embedded minimal hypersurfaces with finite total curvature in Euclidean space in his note \cite{Chodosh1}. We adapt his method to study the immersed submanifolds of arbitrary codimension.  
\begin{theo}[Removal singularity]
\label{theo.removal.singularity}
         Suppose that  $\iota: M^{n} \to {B_2(0)}\setminus\{0\} $ is a smooth minimal immersion, i.e.\,,$$\int_M \div_MY d\mu_{M} = 0\, , \text{ for any } Y\in C^\infty_c(B_2(0)\setminus\{0\},\RR^{n+m})\,.$$ The minimal submanifold $M$ satisfies that  $0 \in \bar {\iota(M)}\,,$ $\vol(\iota^{-1}(B_r(0)\setminus\{0\})) \leq \Lambda r^n$ for any $0<r<2$ and $\int_M\abs{A_M}^n d\mu_{M} <\infty\,.$  
         Then there exists a smooth minimal immersion $\hat \iota: \hat M \to {B_2(0)}\, $, i.e., $$\int_{\hat M} \div_MY d\mu_{M} = 0\, , \text{ for any }Y\in C^\infty_c(B_2(0),\RR^{n+m})\,,$$ satisfying that $\hat M \setminus \hat \iota^{-1}(0) = M$ and $\hat \iota_{|M}  =\iota\,.$
\end{theo}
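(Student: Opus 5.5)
We argue by blowing down at the puncture, which mirrors the analysis at infinity in Theorem \ref{eqival1}. Since $\int_M\abs{A}^n d\mu_M<\infty$, the total curvature of $M\cap\iota^{-1}(B_r(0))$ tends to $0$ as $r\to 0$. Take $r_1$ so small that $\nu(B_{2r_1}(0)) < K_0$. Apply Lemma \ref{curvature.estimate1} to the pieces $M\cap\iota^{-1}(B_{2|\iota(x)|}(0)\setminus B_{|\iota(x)|/2}(0))$; these are properly immersed, since the volume bound and finite curvature allow a slicing argument. The lemma gives the scale-invariant estimate
\[
|A|(x)\,|\iota(x)| \to 0 \quad\text{as } |\iota(x)|\to 0.
\]
Now the rescalings $M_r:=r^{-1}\iota(M)$, restricted to $B_2(0)\setminus B_{1/2}(0)$, have uniformly bounded volume (by $\Lambda$) and curvature tending to zero. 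Hence they subconverge smoothly, with finite multiplicity, to unions of flat $n$-dimensional cones. A cone with $|A|\equiv0$ is a union of planes through $0$, so every tangent cone at $0$ is a finite union of $n$-planes, each with integer multiplicity.

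The next step is to count sheets near $0$ and show each sheet is an embedded punctured disk. Using $|A||\iota|\to0$, run the argument of Lemma \ref{lemm.Annular.composition} with $f=|\iota|^2$. This function has $\nabla^2 f>0$ on $M\cap\iota^{-1}(B_{r_1}\setminus\{0\})$, so it has no critical points there. By Morse theory, $M\cap\iota^{-1}(B_{r_1}(0)\setminus\{0\})$ is diffeomorphic to $(M\cap\iota^{-1}(\partial B_{r_1}))\times(0,r_1]$. The slice $M\cap\iota^{-1}(\partial B_{r_1})$ is compact, because the volume bound together with the curvature estimate bounds the number of components. It consists of finitely many closed $(n-1)$-manifolds $\Sigma_1,\dots,\Sigma_k$. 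Each end $E_i\cong\Sigma_i\times(0,r_1]$ blows down to a plane with some multiplicity. Since the convergence is smooth, $\Sigma_i$ covers $\SS^{n-1}$. Because $n\ge3$, $\SS^{n-1}$ is simply connected, so the cover is trivial, the multiplicity is one, $\Sigma_i\cong\SS^{n-1}$, and each $E_i$ is a graph over a punctured ball in its tangent plane with small gradient. Uniqueness of the tangent plane is not yet known at this stage; it comes in the next step.

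The core analytic step, which I expect to be the main obstacle, is to show that each end $E_i$ extends smoothly across $0$. Parametrize $E_i$ as in \eqref{func.minimal.graph}, with $t\to-\infty$. Equivalently, apply the inversion-free reflection $t\mapsto -t$; this turns the system into an equation of the same type as \eqref{eq.minimal.graph}, but with $F_t$ replaced by $-F_t$, so its indicial roots become $-\lambda_{k,\pm}$. First I need some initial decay $\|F\|_\beta<\infty$ toward the puncture. I would obtain it from the Allard--Almgren / Simon uniqueness result for tangent cones \cite{allard.radial,leon.isolated}, invoked exactly as in the proof of Theorem \ref{eqival1}. The $\mathbb{S}^{n-1}\subset\SS^{n+m-1}$ is integrable, its Jacobi fields being Killing fields, so we get a rate $|F|\le Ce^{\beta t}$ as $t\to-\infty$. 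I would then iterate Proposition \ref{prop.improv} in this reflected setting. The homogeneous pieces that can appear with growth $e^{\lambda t}$, $\lambda>0$, as $t\to-\infty$ are $\lambda=\lambda_{k,+}=k-1\ge0$, and these correspond to the graph function
\[
e^tF = \sum_k r^{k}\Phi_k ,
\]
that is, homogeneous harmonic polynomials. The decaying modes $\lambda_{k,-}$ would blow up at $0$ and are excluded by $\|F\|_\beta<\infty$. The outcome is that the graph function $Y$ over the punctured ball is $C^{1,\alpha}$ up to $0$, with $Y(0)=0$ and $\nabla Y$ continuous. One point needs care: the $k=0$ mode $e^{-t}b$ is excluded because $0\in\overline{\iota(M)}$ forces $Y\to0$.

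Finally, a $C^{1}$ graph over a punctured ball that solves the minimal surface system weakly across the puncture is smooth there. The weak equation holds because a point has zero $(n-1)$-capacity for $n\ge2$: use cutoffs $\chi_\epsilon$ with $\int|\nabla\chi_\epsilon|\,d\mu\le C\Lambda\epsilon^{n-1}\to0$, which extend the first variation identity to $Y\in C^\infty_c(B_2(0))$. Elliptic regularity for the minimal surface system with small $C^{1}$ norm (Schauder theory, or Allard regularity since the density is one) then gives smoothness. Define $\hat M$ by adding one point to each of the $k$ ends, and let $\hat\iota$ send these points to $0$. This produces the required smooth minimal immersion, with $\hat M\setminus\hat\iota^{-1}(0)=M$ and $\hat\iota|_M=\iota$.
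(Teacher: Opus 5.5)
Your strategy is the paper's. You use scale-invariant curvature decay near $0$, then Morse theory for $|\iota|$, and blow down each piece reaching $0$ to a single plane through $0$. Multiplicity one follows because a connected cover of $\SS^{n-1}$ ($n\ge 3$) is trivial. You finish with uniqueness of the tangent plane (Allard--Almgren/Simon) and regularity; the paper cites Simon and then Allard's regularity theorem. Your detour through Proposition \ref{prop.improv} in the reflected variable is harmless but unnecessary. Once you have $|F|\le Ce^{\beta t}$ as $t\to-\infty$ together with derivative bounds, the graph is already $C^{1,\beta}$ at $0$ with $Y(0)=0$ and $\nabla Y(0)=0$.

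\textbf{The critical-point step fails as written.} From $\nabla^2 f>0$ you conclude that $f=|\iota|^2$ has \emph{no} critical points on $M\cap\iota^{-1}(B_{r_1}(0)\setminus\{0\})$. Positivity of the Hessian only says that every critical point is a nondegenerate local minimum. Components that dip into $B_{r_1}(0)$ without approaching $0$ carry such a minimum; a second plane at distance $r_1/2$ from the origin is an example. For those components the claimed product structure $(M\cap\iota^{-1}(\partial B_{r_1}))\times(0,r_1]$ is false. The paper handles exactly this with Lemma \ref{lemm.no.critital.point} and the subsequent choice of $\delta_2$.

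\textbf{How to repair it.} First, a component whose closure contains $0$ has no critical point. With only index-$0$ critical points, the sublevel-set component born at a minimum never merges with the part near $0$, which contradicts connectedness; this is the argument of Lemma \ref{lemm.Annular.composition}. Second, the components that do contain a minimum are finitely many, by monotonicity and the volume bound, and each lies at positive distance from $0$. So shrinking $r_1$ discards them.

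\textbf{Smaller points.}
\begin{itemize}
\item Lemma \ref{curvature.estimate1} applied to your annular pieces only gives $|A||\iota|\le 2C_1$. The decay $|A||\iota|\to0$ needs Lemma \ref{curvature.estimate2} (intrinsic distance dominates extrinsic distance) or a short compactness argument.
\item Your capacity argument for stationarity across $0$ is correct, but it must come \emph{before} you call the blow-downs cones and before you invoke Allard--Almgren/Simon. Monotonicity at $0$ and those theorems both require a varifold stationary in a full ball. The paper leaves this implicit, so stating it is a plus.
\item Properness of $\iota$ into $B_2(0)\setminus\{0\}$ does not follow from the volume and curvature bounds by a slicing argument. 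A plane with the origin and one other point $q\neq0$ removed satisfies every hypothesis and is not proper. Like the paper, you must treat properness as an implicit hypothesis.
\end{itemize}
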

\begin{proof}
    Let $f(x):=|\iota(x)|\,.$  We claim that after rescaling  $M\,,$ we can assume that $|\hat \iota(x)|$ has no critical points in $B_2(0)$ and $M$ intersects $\de B_1(0)$ transversely.
    \begin{lemm} \label{lemm.no.critital.point}
        There exists $\delta_1$ small depending on the immersion $\iota$ such that any critical point of $f$ on $M\cap \iota^{-1}(B_{\delta_1}(0)\setminus \{0\})$ is  a local minimum point.
  
    \end{lemm}
\begin{proof}
   If the lemma does not hold.  There exists a sequence of $  p_j \in M$ satisfying $f(p_j)\to 0$  and $p_j$ is a critical point of $f$ but $p_j$ is not a local minimum point of $f\,.$ Let $\lambda_j = \abs{p_j}^{-1}$ and take $M_j := \lambda_j\iota(M)\,$. By the curvature estimates in Lemma \ref{curvature.estimate2}, after passing to a subsequence,  $M_j$  converges locally smoothly in $\RR^{n+m} \setminus\{0\}$  to $M_\infty$  where $M_\infty$ is an union of $n$-planes. Let $\hat p_j \in M_j $ denote 
   the point corresponding to $p_j$ after rescaling. Let $f_\infty(x) := |\iota_\infty(x)|$ defined on $M_\infty$ and let $f_j(x):=|\iota_j(x)|$ defined on $M_j\,.$ Then $f_j(\hat p_j) = 1$ and $\hat p_j$ is a critical point of $f_j$ but it is not a local minimum point. Since the immersed submanifold $M_j$  converges locally smoothly in $\RR^{n+m} \setminus\{0\}$ to $M_\infty$ and $f_j(\hat p_j) = 1\, $,   after passing to a subsequence, we can assume $\iota_j(\hat p_j)\to \iota_\infty(p_\infty)$ with $ f_\infty(p_\infty)  =1\,.$ 
 Since  $\hat p_j$ is a critical point of $f_j\,,$  $p_\infty$ must be a critical point of $f_\infty$. Thus the immersed submanifold $M_\infty$ must include an  $n$-plane passing the point $\iota_\infty(p_\infty)$ while this plane is normal to the vector $\iota_\infty(p_\infty)\,.$ Hence $p_\infty$ is a 
 local minimum point of $f_\infty\,.$ So for all $j$ large, $\hat p_j$ is a local minimum point of $f_j$ by the locally smooth convergence. This is a contradiction.
\end{proof}
We may rescale $M$ and still denote the immersed submanifold $M$ after rescaling, so that any critical point of $f$ on $M\cap B_{2}(0)$ is a non-degenerate local minimum and $M$ satisfies the assumption in Theorem \ref{theo.removal.singularity}. Let $M'$ be a component of $M$ and $f$ has at least  a critical point on $M'\,.$ Then by Morse theory(as argued in Lemma \ref{lemm.Annular.composition})
 $M'$ is a smooth immersed minimal submanifold in $B_2(0)$ while $0 \not \in \iota(M')\,.$ Hence, by the volume bound and monotonicity formula, there exists $\delta_2$ small such that every component $M'$ of $M \cap B_{
 2}(0)$ with $M' \cap B_{\delta_2}(0) \not = \emptyset$ will have no critical points. Hence we can rescale $M$ so that  $f$ has no critical points and $M$ intersects $\de B_1(0)$ transversely.

 The above arguments are  similar to arguments in \cite[P77]{Chodosh1}. At below, we  use several classical results  in Geometric Measure Theory to address the problems we consider.   Due to the volume bound and monotonicity formula, $M$ has finite components in $B_2(0)\,.$ Let $M'$ be a component of $M$ and let $\eta_j \uparrow \infty \,$. Let $M'_j := \eta_j\iota(M')\,.$
By curvature estimates in Lemma \ref{curvature.estimate2}, after passing to a subsequence, $M_j'$  converges locally smoothly in $\RR^{n+m} \setminus\{0\}$  to  an union of $n$-planes $M_\infty'$ . Since $M'_j$ has no critical points, we can argue as Lemma \ref{lemm.no.critital.point} so that all $n$-planes in $M_\infty'$ are passing the point $0 \in \RR^{n+m}\,.$ Since $M'$ is connected, $f^{-1}(t)\cap M'$ is also connected by the standard Morse theory. Then $M'_\infty$ is an $n$-plane passing $0\in \RR^{n+m}$ and  $f^{-1}(1)\cap M'_j$ converges smoothly  to  $M'_\infty\cap \de B_1(0) = \SS^{n-1}\,.$ So for all $j$ large enough, $f^{-1}(1)\cap M'_j$  is a $K_j$-sheeted covering space of $\SS^{n-1}\,.$ It is well known that  $\SS^{n-1}(n\geq 3)$ is simply connected, so  $K_j = 1$ due to the standard covering space theory. Hence $M'_{\infty}$ is of multiplicity one. Then by Theorem 5 and corresponding Corollary of \cite{asymptotics}, the varifold $|M'_\infty|$ is the unique tangent cone of the varifold $|M'|$ at $0 \in \RR^{n+m}$($|M'_\infty| $ and $|M'|$ denote the the corresponding varifolds of $M'_\infty$ and $ M'$). So we get that $M'$ is a smooth minimal graph passing $0 \in \RR^{n+m}$ in a neighborhood of $0 \in \RR^{n+m}$ due to Allard's regularity theorem\cite{Allard1}. So there exists $\hat \iota$  which extends $\iota$ to a smooth  minimal immersion in $B_2(0)\,$. 
\end{proof}

Inspired by the study in \cite{Chodosh17} of embedded minimal hypersurfaces case, we define the hypothesis \hyperlink{defi:beth}{$(\beth)$} as follows.

Fix $n\,,m\,,I \in \ZZ^+\,, n\geq3\,,m\geq1\,.$  Assume that:

 \begin{enumerate}[itemsep=5pt, topsep=5pt]
 \item We have $M_j(j\in \ZZ^+)$  a sequence of $n$-dimensional complete properly immersed minimal submanifolds in $ {B_{2}(0)}$ with $\iota_j:M_{j}\to \bar{B_{2}(0)} \subset \RR^{n+m}$ and $\iota_j(\partial M_{j})= \partial B_{2}(0) \cap \iota_j (M)\,$.
 \item The submanifolds $M_{j}$ are connected.
 \item The submanifolds have total curvature $\int_{M_j} \abs{A_{M_j}}^n d\mu_{M_j}< IK_0\,.$
 \item The submanifolds satisfy $\vol(M_{j}\cap B_r(p)) \leq \Lambda r^{n}$ for any $B_r(p) \subset B_3(0)\,.$
 \item There is a sequence of non-empty smooth blow-up sets $\cB_{j}\subset B_{\fh{\sigma_{0}}{2}}(0)$ (where $\sigma_{0}$ is fixed in Lemma \ref{lemm.Annular.composition}) with $|\cB_{j}|< I$ and $C>0$ so that for all $x \in M_{j}\,,$
 $$
 |A_{M_{j}}|(x) d(\iota_j(x),\iota_j(\cB_{j}\cup \partial M_{j})) \leq C\,, \text{ for all }j\in \ZZ^+\,,
 $$
 and  $\iota_j(\cB_j)$  converges  to the subset $\widetilde \cB_{\infty} \subset \RR^{n+m}$ of finite points in the sense of Hausdorff distance with $\abs{\widetilde \cB_{\infty}}< I\,.$
 
 \item The submanifold $M_{j} \cap B_{\fh{3}{2}}(0)$ converges in the sense of varifolds in $B_{\fh3 2
 }(0)$ to the union of some complete connected immersed minimal submanifolds $M_\infty = \bigcup_{i=1}^{N} M_{\infty,i}$ in $B_{\fh{3}{2}}(0)$ with corresponding multiplicity, i.e., 
\begin{equation*}
    |M_{j}\cap B_{\fh{3}{2}}(0)| \rightharpoonup  \sum_{i=1}^{N}k_i|M_{\infty, i}|\, .
\end{equation*}
We have $\iota_\infty: M_\infty \to \bar{B_{\fh{3}{2}}(0)}$ and $\iota_\infty(\de M_\infty) =\de B_{\fh{3}{2}}(0)\cap \iota_\infty(M_\infty)\,.$ Moreover, $M_j$ converges locally smoothly in $B_{\fh{3}{2}}(0)\setminus \widetilde \cB_\infty$ to $M_\infty\,.$  
 \end{enumerate}

\noindent Then, we say that the sequence $M_{j}$ satisfies \hyperlink{defi:beth}{$(\beth)$}. 

In item(6), we assume that the limit varifold is the the union of some complete connected immersed minimal submanifolds $M_\infty = \bigcup_{i=1}^{N} M_{\infty,i}$ in $B_{\fh{3}{2}}(0)$ with corresponding multiplicity, while in \cite{Chodosh17}, they  assumed that the limit varifold is a plane  with integer multiplicity.
In our paper, the minimal submanifolds are immersed in $\RR^{n+m},$ so we need to consider more cases.

The following theorem is key to the proof of Theorem \ref{theo.finite.top}. Our proof is inspired by Proposition 7.1 of \cite{Chodosh17}, where they have proved the case of embedded minimal hypersurfaces under the condition of finite index. However, in our situation, the one point concentration may not be a plane, so we need to blow up again and use an induction argument on the total curvature of the minimal submanifolds.


\begin{theo} \label{prop.diffeo}
Given a sequence $M_{j}$ satisfying \hyperlink{defi:beth}{$(\beth)$} and each $M_j$  intersects $\partial B_{1}(0)$ transversely. By passing to a subsequence, all of the $M_{j}\cap B_{1}(0)$ are diffeomorphic. 
\end{theo}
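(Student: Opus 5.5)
We argue by induction on $I$, the total curvature bound in \hyperlink{defi:beth}{$(\beth)$}. The strategy is to cut $M_j\cap B_1(0)$ into three kinds of pieces: a part converging smoothly to the limit, thin neck regions around each point of $\widetilde\cB_\infty$, and small regions near these points, which we treat by blowing up. For $I=1$ the blow-up sets are empty, since each non-flat blow-up limit carries total curvature at least $2K_0$ by Corollary \ref{Bernstein1}. In that case $M_j\cap B_{3/2}(0)$ converges smoothly to $M_\infty$, and transversality at $\de B_1(0)$ passes to the limit. So $M_j\cap B_1(0)$ is eventually a normal graph, possibly a multi-sheeted cover, over $M_\infty\cap B_1(0)$; the covering multiplicities stabilize along a subsequence, and all the pieces are diffeomorphic. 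For the inductive step, write $\widetilde\cB_\infty=\{y_1,\dots,y_s\}$. Lemma \ref{lemm:curv.est} gives $\nu_\infty(y_i)\geq 2K_0$ at each point.

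\textbf{Step 1: the limit and the necks.} First, $M_\infty$ extends smoothly across $\widetilde\cB_\infty$ by Theorem \ref{theo.removal.singularity}. Choose $r>0$ so small that the balls $B_r(y_i)$ are disjoint and lie inside $B_{1/2}(0)$. On $B_1(0)\setminus\bigcup_i B_r(y_i)$ the convergence is smooth, so that region is handled exactly as in the base case. Near $y_i$, the limit $M_\infty$ consists of finitely many smooth sheets through $y_i$, each counted with a multiplicity $k_i$. Since $y_i$ is an isolated concentration point, for $j$ large there are scales $\rho_j\to0$ such that $M_j\cap\bigl(B_r(y_i)\setminus B_{\rho_j}(y_i)\bigr)$ has small curvature mass, namely $\nu_j<K_0$. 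Together with the curvature estimate in item (5) of \hyperlink{defi:beth}{$(\beth)$}, this should give $|A|(x)|\iota_j(x)-y_i|\leq\frac14$ in the neck. We then rescale $B_r(y_i)$ to $B_2(0)$ and apply Lemma \ref{lemm.Annular.composition} with $\sigma$ corresponding to $\rho_j/r$. Its conclusion is that the neck is a union of $k$ copies of $\SS^{n-1}\times[0,1]$, with $k$ counting the sheets with multiplicity. Hence $M_j\cap B_r(y_i)$ is diffeomorphic to $M_j\cap B_{\rho_j}(y_i)$ with collars attached.

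\textbf{Step 2: the inner pieces.} Rescale $M_j\cap B_{2\rho_j}(y_i)$ by $\rho_j^{-1}$. Choose $\rho_j$ so that this rescaling again satisfies \hyperlink{defi:beth}{$(\beth)$}, after splitting into connected components. Two cases arise. If all the blow-up points of $\cB_j$ near $y_i$ lie at mutually comparable distances of order $\rho_j$, the rescaled limit is a finite-total-curvature minimal submanifold. It is regular at infinity by Theorem \ref{eqival1}, and we obtain smooth convergence away from finitely many points. The total curvature strictly drops away from any new concentration point, because each point carries at least $2K_0$ while the total is less than $IK_0$. If instead the blow-up points cluster at a smaller scale, the total curvature of each component of the rescaled piece is still less than $IK_0$. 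The first case, of a single concentration point at the scale $\rho_j$ with a non-flat limit, is where we need to blow up again. There the removed central piece has total curvature below $(I-2)K_0$, so we apply the induction hypothesis to it; this is the induction on total curvature mentioned in the introduction. In all cases the induction hypothesis, together with passing to a subsequence, makes the inner pieces diffeomorphic for all $j$.

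\textbf{Step 3: gluing, and the main obstacle.} The pieces are glued along spheres $\SS^{n-1}$. The gluing maps are isotopic along a subsequence because the boundary spheres converge smoothly, so the resulting diffeomorphism types agree. The main obstacle is choosing the neck scales $\rho_j$ correctly. We must guarantee the hypothesis $|A|\,|\iota_j-y_i|\leq\frac14$ of Lemma \ref{lemm.Annular.composition} on the whole annulus, and also that the inner rescaled pieces genuinely have smaller curvature or fewer blow-up points. To do this I would take $\rho_j$ to be the largest scale at which $\nu_j\bigl(B_{\rho}(y_i)\setminus B_{\rho/2}(y_i)\bigr)\geq\epsilon$ for a small $\epsilon$. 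A contradiction and compactness argument, using Corollary \ref{Bernstein1} and Lemma \ref{curvature.estimate1}, then shows that the intermediate annuli converge to cones, that is, to planes through $y_i$, with small curvature.
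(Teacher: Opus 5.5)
There is a genuine gap: nothing in your proposal guarantees that the induction on $I$ actually makes progress.

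\textbf{The stalling configuration.} You center every neck at the fixed limit point $y_i$ and choose $\rho_j$ by a mass threshold on dyadic annuli about $y_i$. But the blow-up points converge to $y_i$ at a rate unrelated to their blow-up scale. Take the simplest configuration compatible with $(\beth)$: $\cB_j=\{p_j\}$ with $|\iota_j(p_j)-y_i|\,|A_{M_j}|(p_j)\to\infty$, and $M_j$ otherwise close to planes. Your criterion then selects $\rho_j$ comparable to $|\iota_j(p_j)-y_i|$. The inner piece $\rho_j^{-1}(\iota_j(M_j)-y_i)\cap B_2(0)$ then converges to flat sheets with a single concentration point $z$, where $|z|$ is of order one and $z$ carries all the curvature. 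This sequence violates item (5) of $(\beth)$, and also the condition $p\in B_{\sigma_0}(0)$ in Lemma \ref{lemm.Annular.composition}. Its curvature bound is still $IK_0$. So neither the induction hypothesis nor a drop in $I$ is available.

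\textbf{Why Steps 2 and 3 do not repair this.} Your Step 2 case list has no entry for a flat limit with one off-center concentration point. ``Choose $\rho_j$ so that the rescaling satisfies $(\beth)$'' contradicts your own Step 3 choice of $\rho_j$, and ``still less than $IK_0$'' is not progress. The root issue is that mass $\geq\epsilon$ in an annulus does not force a non-flat limit, because a concentration point sitting in that annulus can supply the mass.

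\textbf{Two smaller points.}
\begin{itemize}
\item In Step 1, $\nu_j<K_0$ together with item (5) does not give $|A|\,|\iota_j(x)-y_i|\leq\frac{1}{4}$. Lemma \ref{curvature.estimate1} only yields the constant $C_1$. Your dyadic fix needs an $\epsilon$-regularity version with an arbitrarily small constant, which is provable along the lines of Lemma \ref{curvature.estimate2}.
\item A non-flat intermediate limit only removes about $\frac{3}{2}K_0$. The central piece is therefore below $(I-1)K_0$, not below $(I-2)K_0$.
\end{itemize}

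\textbf{How the paper closes the recursion.}
\begin{itemize}
\item Lemma \ref{lemm.Annular.composition} is stated with a center $p\neq 0$ for exactly this reason. After rescaling $B_{r_0}(\widetilde p_\infty)$ to unit size, the neck is centered at the blow-up point $\iota_j(p_j)$, while the outer sphere stays $\partial B_1(0)$.
\item If several blow-up points collapse to one limit point, the paper first rescales by their diameter $\eps_j$. This yields either at least two concentration points or total curvature $<(I-1)K_0$.
\item In the one-point case, it takes $\delta_j$ to be the smallest radius above $R/\lambda_j$ with $|A_{M_j}|(x)\,|\iota_j(x)-\iota_j(p_j)|<\frac{1}{4}$ outside $B_{\delta_j}(\iota_j(p_j))$.
\end{itemize}

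This pointwise choice does double duty:
\begin{itemize}
\item It is exactly the hypothesis of the annular lemma.
\item By minimality of $\delta_j$, the estimate fails arbitrarily close to the sphere of radius $\delta_j$. Hence the limit $\hat M_\infty$ of $\delta_j^{-1}(\iota_j(M_j)-\iota_j(p_j))$ has a point on $\partial B_1(0)$ with $|A|\geq\frac{1}{4}$, so it is non-flat.
\item After Theorem \ref{theo.removal.singularity} and Corollary \ref{Bernstein1}, $\hat M_\infty$ carries at least $\frac{3}{2}K_0$ in $B_\gamma(0)\setminus B_{3\kappa}(0)$. This leaves $<(I-1)K_0$ in $B_{2\kappa}(0)$ for the induction.
\end{itemize}
If no such $\delta_j$ is needed (Case I), the decay reaches the blow-up scale. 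The inner piece is then a cover of $\breve M_\infty\cap B_R(0)$, handled by Lemma \ref{lemm:covers}.

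To close your recursion you need both ingredients: recentering at $p_j$, and a scale choice that forces either a non-flat limit or a genuine splitting of the concentration.
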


\begin{proof}

We prove the theorem by induction on $I$. For $I=1$ the theorem trivially follows from curvature estimates in Lemma \ref{curvature.estimate1}, the volume bound and Lemma \ref{lemm:covers}. 

Then we suppose the theorem \ref{prop.diffeo} holds for $I-1(I>1)\,.$   We choose $r_0$ small enough so that  for any $\widetilde p_\infty \in \tilde \cB_\infty\,,$ $M_\infty$ is sufficiently smoothly close to the union of $n$-planes passing $\widetilde p_\infty$ in $B_{r_0}(\widetilde p_\infty)\,,$ and $\min \limits_{\substack{\widetilde p_\infty,\widetilde q_\infty \in \tilde \cB_\infty \\ \widetilde p_\infty \not=\widetilde q_\infty}}|\widetilde p_\infty- \widetilde q_\infty|>4r_0\, . $ Since $M_j$  converges locally smoothly in $B_{\fh{3}{2}}(0) \setminus \tilde \cB_\infty$ to $M_\infty\,,$  fixing $r_0$ small enough, for all $j$ large enough,  $M_j \cap B_{1}(0) \setminus B_{r_0}(\tilde \cB_\infty)$ is a smooth covering space of $M_\infty \cap B_{1}(0) \setminus B_{r_0}(\tilde \cB_\infty)$ with the numbers of sheets on different components of $M_\infty \cap B_{1}(0) \setminus B_{r_0}(\tilde \cB_\infty)$   uniformly bounded due to the monotonicity formula and the uniform volume bound. So after passing to a subsequence, we can assume that all of $M_j\cap B_{1}(0) \setminus B_{r_0}(\tilde \cB_\infty)$ are diffeomorphic  by  Lemma \ref{lemm:covers}. From Theorem 7.6.2  and Proposition 7.6.4 of \cite{Amiya}, the smooth structure is independent of gluing maps. So there are only finitely many ways to connect the regions of $M_j\cap B_{1}(0) \setminus B_{r_0}(\tilde \cB_\infty)$ to the regions of $M_j \cap B_{r_0}(\tilde \cB_\infty)$, and we only need to prove that after passing to a subsequence, $M_j \cap B_{r_0}(\widetilde p_\infty)  $ are diffeomorphic for every $\widetilde p_\infty \in \tilde \cB_\infty\,.$ After rescaling, $$\check M_j :=  r_0^{-1}(
\iota_j(M_j) - \widetilde p_\infty)\,.$$  By the volume bound and monotonicity formula, we can choose a component of $\check M_j$ in $B_2(0)\,,$ and we get a sequence of minimal submanifolds in $\RR^{n+m}$ satisfying \hyperlink{defi:beth}{$(\beth)$} with limit submanifold in item (6) is smoothly  sufficiently close to the union of $n$-planes passing $0 \in \RR^{n+m}\,.$ Abusing notation slightly, we will still denote this sequence of submanifolds $M_j\,.$ Hence we only need to prove the theorem for this sequence of minimal submanifolds.

If $|\tilde \cB_\infty| \geq 2\,,$ we can argue as the proof of Lemma \ref{lemm:curv.est},  and get that the new sequence $M_j$ as defined above has total curvature $$\int_{M_j} \abs{A_{M_j}}^n d \mu_{M_j}< (I-1)K_0\,.$$ So the theorem holds by the induction hypothesis.

If $|\tilde \cB_\infty| = 1 $ and $\liminf \limits_{j\to \infty}|\cB_{j}| = 1\,,$  after passing to a subsequence, we can write $\cB_{j} = \{p_{j}\}\,,$  $\widetilde \cB_{\infty}=\{\widetilde p_{\infty}\}$ and $\lambda_{j} : = |A_{M_{j}}|(p_{j})\,$. By passing to a subsequence, we have that 
$$
\breve M_{j} : = \lambda_{j}(\iota_j(M_{j})-\iota_j(p_{j}))
$$
converges to  a complete, non-flat, properly immersed minimal submanifold $\breve M_{\infty}\subset \RR^{n+m}$ without boundary satisfying that the total curvature of $\breve M_{\infty}$  $\leq$ $IK_0$ and $\vol(\breve{M}_{\infty} \cap B_{r}(0))\leq \Lambda r^{n}$ for any $r>0$ due to the monotonicity formula. So by Theorem \ref{eqival1}, $\breve{M}_{\infty}$ is  regular at infinity. In particular, we can choose $R>0$ so that $\breve{M}_{\infty}$ intersects $\partial B_{R}(0)$ transversely and
$$
|A_{\breve{M}_{\infty}}|(x) |\iota_\infty(x)| < \frac 1 4
$$
for all $x \in\breve{M}_{\infty}\setminus B_{R}(0)$, where $|\iota_\infty(x)|$ denotes the Euclidean distance between $0 \in \RR^{n+m}$ and the image of $x \in \breve M_\infty$ in $\RR^{n+m}\,.$

Case I: after passing to a subsequence,   if for all $j\,,$ 
\begin{equation}\label{eq:prop.cur1}
|A_{M_{j}}|(x)|\iota_j(x)-\iota_j(p_{j})| < \frac 1 4
\end{equation}
for all $x \in M_{j} \cap \left( B_{2}(0) \setminus B_{R/\lambda_{j}}(\iota_j(p_{j}))\right)\,$. Then Lemma \ref{lemm:covers} and Lemma \ref{lemm.Annular.composition} imply that after passing to a subsequence, all of the submanifolds $M_{j}\cap B_{1}(0)$ are diffeomorphic (here, we have used the fact that the ends of $\breve M_\infty$ are diffeomorphic to $\SS^{n-1}\times (0,1)$ with the standard smooth structure and $\breve M_j\cap B_{R}(0)$ is a smooth covering space of $\breve M_\infty \cap B_{R}(0)\,$).

Case II: on the other hand, if \eqref{eq:prop.cur1} does not hold, we may choose $\delta_{j}$ to be the smallest radius
   greater than $R/\lambda_{j}$ so that
$$
|A_{M_{j}}|(x)|\iota_j(x)- \iota_j(p_{j})| < \fh{1}{4}
$$
for all $x \in M_{j} \cap \left( B_{2}(0) \setminus B_{\delta_{j}}(\iota_j(p_{j}))\right)\,$. For all $j$ sufficiently large, such a $\delta_{j}$ exists with $\delta_{j}\to 0$. This follows from the fact that $M_{j}$ converges smoothly away from $\tilde p_{\infty}$ to a submanifold sufficiently close to the union of $n$-planes. Furthermore, we may assume $\liminf \limits_{j\to \infty
}\lambda_j\delta_j = \infty\,,$ otherwise we can take a subsequence of $M_j$ and take $R$ larger so that $R > \liminf \limits_{j\to \infty
}\lambda_j\delta_j\,,$ then \eqref{eq:prop.cur1} holds for all $x \in M_{j} \cap \left( B_{2}(0) \setminus B_{R/\lambda_{j}}(\iota_j(p_{j}))\right)$ with all  $j$ large enough, and the theorem follows the same arguments in Case I. At below, we define
$$
\hat M_{j} := \delta_{j}^{-1} (\iota_j(M_{j})-\iota_j(p_{j}))\, .
$$
After passing to a subsequence, there is an immersed minimal submanifold $\hat M_{\infty}$ in $ \RR^{n+m} \setminus \{0\}$ so that $\hat M_{j}$ converges locally smoothly in  $\RR^{n+m}\setminus \{0\}$ to $\hat M_{\infty}$  with finite multiplicity(the multiplicity may be different for distinct components of $\hat M_\infty$) by the curvature estimates in item (5). Moreover, after passing to a subsequence, $|\hat M_{j}| \weakto \abs{\hat M_\infty}$ in the sense of varifolds in $B_1(0)\,.$ 

Since $\hat M_{\infty}$ has finite total curvature and satisfies the volume growth condition of Theorem \ref{theo.removal.singularity} by the varifold convergence and monotonicity formula,  the possible singularity at $\{0\}$ is removable. Hence $\hat M_{\infty}$ is an immersed minimal submanifold in $\RR^{n+m}$ with total curvature $\int_{\hat M_{\infty}} \abs{A_{\hat M_{\infty}}}^n d\mu_{\hat M_{\infty}}\leq I K_0$ and $\vol(\hat M_{\infty}\cap B_{r}(0)) \leq \Lambda r^{n}$ for any $r>0\,.$ Moreover, $\hat M_{\infty}$  has bounded number of components due to the volume bound. By Theorem \ref{eqival1}, every component of $\hat M_{\infty}$ is regular at infinity.  Then we can choose $\gamma \geq 1$ large enough so that $\partial B_{\gamma}(0)$ intersects each component of $\hat M_{\infty}$ transversely, and each component of  $\hat M_{\infty} \cap \partial B_{\gamma}(0)$ is diffeomorphic to $\SS^{n-1}$ with the standard smooth structure. Moreover the number of components of $\hat M_{\infty} \cap \partial B_{\gamma}(0)$ is no more than $\Lambda\,.$ By the choice of $\delta_{j}\,$, the curvature estimates \eqref{eq:prop.cur1} hold for all $x \in  M_{j} \cap \left( B_{2}(0) \setminus B_{\gamma \delta_{j}}(\iota_j(p_{j}))\right)\,$. Then by applying Lemma \ref{lemm.Annular.composition}, we see that $M_{j} \cap \left( B_{2}(0) \setminus B_{\gamma \delta_{j}}(\iota_j(p_{j}))\right)$ is diffeomorphic to the union of annular regions. In particular, $M_{j}\cap B_{\gamma\delta_{j}}(\iota_j(p_{j}))$ must be connected (because we have assumed that $M_{j}$ is connected in \hyperlink{defi:beth}{$(\beth)$}). Then we only need to prove $\hat M_j \cap B_\gamma(0)$ are diffeomorphic to each other after passing to a subsequence.

 By the choice of $\delta_j\,,$ there exists at least a non-flat component of $\hat M_{\infty}\,.$ Applying Corollary \ref{Bernstein1}, we can take $\gamma$ sufficiently large and $\kappa$ sufficiently small so that $$\int_{ \hat M_{\infty}\cap B_\gamma(0)\setminus B_{3\kappa}(0)} \abs{A_{\hat M_{\infty}}}^n d\mu_{\hat M_{\infty}}\geq \fh{3}{2}K_0\,.$$
Then after passing to a subsequence, $$\int_{\hat M_j \cap B_{2\kappa}(0) }\abs{A_{\hat M_j}}^n d\mu_{\hat M_j} < (I-1)K_0$$ for all $j\,.$ 
After rescaling $\widetilde M_j := \kappa^{-1} \hat \iota_j(\hat M_j)\,.$ We can choose some  component $\widetilde M_j'$ of $\widetilde M_j\,,$ then we have the fact  that the sequence $\widetilde M_j'$ will satisfy \hyperlink{defi:beth}{$(\beth)$} with total curvature $$\int_{\widetilde M_j'\cap B_2(0)} \abs{A_{\widetilde M_j'}}^n d\mu_{\widetilde M'_j} <(I-1)K_0\,.$$
By the induction hypothesis, $\widetilde M_j'\cap B_1(0)$ are diffeomorphic. Then after passing to a subsequence, $\hat M_j\cap B_\kappa(0)$ are diffeomorphic. By the locally smooth convergence of $\hat M_j\,,$ we have $\hat M_j \cap B_\gamma(0)$ are diffeomorphic to each other after passing to a  subsequence. This completes the proof in the case that $|\tilde \cB_\infty| = 1 $ and $\liminf \limits_{j\to \infty}|\cB_{j}| = 1\,.$

If $|\tilde \cB_\infty| =1\, ,\liminf\limits_{j\to \infty}|\cB_j| \geq 2\, $. Then $\eps_j : = \max \limits_{\substack{p_i,q_j \in \cB_j \\ p_j \not = q_j}}d (\iota_j(p_j),\iota_j(q_j)) \to 0\,.$ By the definition of smooth blow-up sets, $\lim \limits_{j\to \infty} \eps_j |A_{M_j}|(p_j) = \infty$ for all $p_j \in \tilde \cB_j\,.$ Then we fix $p_j\,,q_j \in \cB_j$  satisfying  $\eps_j = |\iota_j(p_j)-\iota_j(q_j)|\,.$  Let  $ M^*_j: = \fh{\sigma_0}{4\eps_j}(\iota_j(M_j)-\iota_j(p_j))\,,$ and we can choose a component $M^{*'}_j$ of $ M^*_j$ in $B_2(0)\,.$  After passing to a subsequence, this sequence will satisfy \hyperlink{defi:beth}{$(\beth)$} with $|\tilde \cB_{\infty}| \geq 2$ or $$\int_{M^{*'}_j\cap B_2(0)} \abs{A_{M^{*'}_j}}^nd\mu_{M^{*'}_j} <(I-1)K_0\,.$$


 Hence after passing to a subsequence, all of the $B_{\fh{4\eps_j}{\sigma_0}}(\iota_j(p_j)) \cap M_j$ are diffeomorphic. As the situation of $|\tilde \cB_\infty| = 1 $ and $\liminf \limits_{j\to \infty}|\cB_{j}| = 1\,,$ then we can argue in two cases and prove the theorem.
 
\end{proof}

 \begin{proof}[Proof of Theorem \ref{theo.finite.top}]
       We will prove Theorem \ref{theo.finite.top} by contradiction. Since the volume bound and monotonicity formula imply that there exist at most finite number of components for any minimal submanifold satisfying the assumption of Theorem \ref{theo.finite.top}, without loss of generality, we can assume the minimal submanifolds satisfying the assumption of Theorem \ref{theo.finite.top} are connected. If $M_j^n$ is a sequence of pairwise non-diffeomorphic  complete connected, immersed minimal submanifold in $\RR^{n+m}$ with $\vol(M_j\cap B_{R}(0)) \leq \Lambda R^{n}$ for any $R>0$ and $$\int_{M_j} \abs{A_{M_j}}^n d \mu_{M_j}\leq \Gamma < IK_0\,\,.$$ By rescaling $M_j,$ we can assume $$\int_{M_j\setminus B_{\fh{1}{j}}(0)}\abs{A_{M_j}}^n d \mu_{M_j} <\fh{1}{j}\,,$$ and $M_j $ intersects $\de B_1(0)$ transversely. By 
     Theorem \ref{eqival1},
      $M_j$ is properly immersed and  regular at infinity. By rescaling $M_j\,,$ we can assume   $M_j \setminus B_{\fh1 2}(0) $ is the union of minimal graph and each minimal graph is defined over the exterior of a bounded region in an $n$-plane passing $0 \in \RR^{n+m}\,.$  So after passing to a subsequence, we can assume the $M_j\cap B_1(0) $ are pairwise non-diffeomorphic. By Lemma \ref{lemm:curv.est}, the sequence $M_j\cap B_2(0)$ satisfies  \hyperlink{defi:beth}{$(\beth)$}. Then by Theorem \ref{prop.diffeo}, after passing to a subsequence, all of the $M_j\cap B_1(0)$ are diffeomorphic. This is a contradiction.
 \end{proof}

\appendix

\section{Calculations of Minimal Surface System}\label{Appendix.A}
 We can choose a local coordinate $(U,y^1,\dots,y^{n-1})$ for a neighborhood of $\SS^{n-1} \subset \RR^{n}$ where $\SS^{n-1}$ is of the standard Riemannian metric as a sphere  with radius 1\,.  Hence, in this coordinate, the Riemannian metric is $g_{\SS^{n-1}} = \sigma_{ij}dy^idy^j\,.$ Let $x:\SS^{n-1}\to \RR^n,$ and $x = (x^1, \cdots,x^n).$ We denote $e_i : = \di x \in \RR^n$ for $1\leq i \leq (n-1),$ where $\de_i x = (\fh{\de x^1}{\de y^i},\cdots, \fh{\de x^n}{\de y^i}).$  Then $ \ang{e_i,x} = 0\, , \ang{e_i,e_j} = \sigma_{ij}\, , $ and $\{e_1,\dots , e_{n-1},x\}$ is a local frame on $\RR^n \,.$ we denote $\sigma^{ij}$  the $ij$th entry in the inverse of  
   $(\sigma_{ij})\, .$ We have $$\di e_j=\di(\dj x) = \fh{\de}{\de y^i}(\fh{\de x^1}{\de y^j},\cdots, \fh{\de x^n}{\de y^j}) = (\fh{\de^2 x^1}{\de y^i\de y^j},\cdots, \fh{\de^2 x^n}{\de y^i\de y^j})\, .$$  Then $\di e_j = \ang{\di e_j,x}x + \ang{\di e_j,e_k} \sigma^{k\ell}e_\ell = -\sigma_{ij}x + \ang{\di e_j,e_k} \sigma^{k\ell}e_\ell\, .$
 Given a function $F$ on  Riemannian   manifold $(\SS^{n-1} \times \RR^+,g_{\SS^{n-1}}+ dt^2)\, ,$ we denote $\d F$ as the gradient of $F\, , $ and $\d^2F$ as the Hessian of $F\, \,.$  

   We compute the induced Riemannian metric from $\RR^{n+m}$ of the graph in \eqref{func.minimal.graph}. In local coordinate $(U \times (a,b),y^1,\dots,y^{n-1},t)\, ,$ let $F_i = (\fh{\de F^1}{\de y^i}, \cdots, \fh{\de F^m}{\de y^i}),$ and $F_t =(\fh{\de F^1}{\de t}, \cdots, \fh{\de F^m}{\de t}).$ 
   \begin{align*}
       g_{ij} &= e^{2t} (\sigma_{ij}+\ang{F_i,F_j})\, ,1\leq i,j \leq(n-1)\, , \\
       g_{nn} &=e^{2t}(1+ \abs{F_t+ F}^2)\, , \\g_{ni} &= e^{2t}\ang{F_t+ F,F_i}\, ,1\leq i \leq (n-1)\, .
   \end{align*} We denote $g = \det(g_{ij})$ and denote $g^{ij}$ 
   the $ij$th entry in the inverse of  
   $(g_{ij})\, \,.$ Let $\cQ(F)$ denote the nonlinear term about $F\, ,\d F\, \,.$ Hence 
   \begin{gather}
   \begin{aligned}
       g^{ij} &=  e^{-2t}(\sigma^{ij} + \cQ(F))\, ,1\leq i\,,j \leq(n-1)\,, \\
       g^{nn} &=e^{-2t}(1 + \cQ(F))\,,\\
       g^{ni} &= e^{-2t}\cQ(F)\, ,1\leq i \leq (n-1)\,.
       \end{aligned} \label{eq.inver.asym}
   \end{gather}
Let $\la$ denote the Laplacian operator on the graph in \eqref{func.minimal.graph} with the induced Riemannian metric from $\RR^{n+m}\,.$ Combined  \eqref{func.minimal.graph}, we have
   \begin{equation*}
       \la \Psi(x,t) =  0 \, .
   \end{equation*}
Hence, \begin{equation*}
    \la (e^tx) = 0\, , \text{ and }\la (e^tF) = 0 \, .
\end{equation*}
 In local coordinate, we have
   \begin{align}
       \dt(\sqrt{g}g^{nn}e^tx) +\dt(\sqrt{g}g^{nj}e^t\dj x) + \di(\sqrt{g}g^{ij}e^t\dj x) +
       \dj(\sqrt{g}g^{jn}e^tx)= 0 \label{eq.minimal.domain} \, ,\\ \dt(\sqrt{g}g^{nn}e^t(F+F_t))+\dt(\sqrt{g}g^{nj}e^tF_j) + \di(\sqrt{g}g^{ij}e^tF_j) +
       \dj(\sqrt{g}g^{jn}e^t(F+F_t))= 0 \, .\label{eq.minimal.image}
   \end{align}

From \eqref{eq.minimal.domain}, we have\begin{gather}
\begin{aligned}
    \dt(\sqrt{g}g^{nn}e^t)x +\dt(\sqrt{g}g^{nj}e^t)e_j + \di(\sqrt{g}g^{ij}e^t)e_j &\\
    +\sqrt{g}g^{ij}e^t\di(e_j)+
       \dj(\sqrt{g}g^{jn}e^t)x + \sqrt{g}g^{jn}e^t e_j & = 0 \, . 
       \end{aligned}        
\end{gather}
 So we have 
 \begin{gather}
     \begin{aligned}
     \dt(\sqrt{g}g^{nn}e^t) +\dj(\sqrt{g}g^{jn}e^t) - \sqrt{g}g^{ij}e^t\sigma_{ij} = 0 \, , \\
     \dt(\sqrt{g}g^{nj}e^t) + \di(\sqrt{g}g^{ij}e^t) +
     \sqrt{g}g^{k\ell}e^t\ang{\de_k e_\ell,e_i} h^{ij}+
     \sqrt{g}g^{jn}e^t = 0 \, . \label{eq.iden.1}
     \end{aligned}
 \end{gather}
 From \eqref{eq.minimal.image},
 we have \begin{gather}
       \begin{aligned}
    &\dt(\sqrt{g}g^{nn}e^t)(F+F_t)+ \sqrt{g}g^{nn}e^t(F_t+F_{tt}) +\dt(\sqrt{g}g^{nj}e^t)F_j + \sqrt{g}g^{nj}e^t\dt F_j+
    \\
    &\di(\sqrt{g}g^{ij}e^t)F_j +
    \sqrt{g}g^{ij}e^t\di F_j+
       \dj(\sqrt{g}g^{jn}e^t)(F+F_t)+
\sqrt{g}g^{jn}e^t\dj(F+F_t) = 0 \, .
       \end{aligned} \label{eq.sim.min.doma}
\end{gather}
Combined \eqref{eq.iden.1} and \eqref{eq.sim.min.doma}, we have
\begin{gather}
    \begin{aligned}
         \sqrt{g}g^{nn}e^t(F_t+F_{tt})  +\sqrt{g}g^{ij}e^t\di F_j+\sqrt{g}g^{ij}e^t\sigma_{ij}(F+F_t)& 
    \\
-\sqrt{g}g^{k\ell}e^t\ang{\de_k e_\ell,e_i} \sigma^{ij}F_j +
\sqrt{g}g^{jn}e^t\dj F_t+ \sqrt{g}g^{nj}e^t\dt F_j &= 0 \, .
    \end{aligned}
\end{gather}
Combined  \eqref{eq.inver.asym}, we have
\begin{gather}
    \begin{aligned}
       \sqrt{g}e^{-t}  \left( F_t+F_{tt}  +\sigma^{ij}\di F_j+
       (n-1)(F+F_t) -\sigma^{k\ell}\ang{\de_k e_\ell,e_i} \sigma^{ij}F_j +
\cQ(F) \right )= 0 \, ,
    \end{aligned}
\end{gather}
where $\cQ(F)$ gathers all the nonlinear terms consisting of $F\,, \d F\,, \d^2F$ at least  cubic.
 Moreover, $$\sigma^{ij}\di F_j-\sigma^{k\ell}\ang{\de_k(e_\ell),e_i} \sigma^{ij}F_j = \sigma^{ij}\left(\di(\dj F) -\ang{\de_i(e_j),e_k} \sigma^{k\ell}F_\ell\right)
  = \la_{\SS^{n-1}}F \, \,.$$ So we have equation \eqref{eq.minimal.graph}
\begin{gather*}
    F_{tt} + nF_t +(n-1)F + \la_{\SS^{n-1}}F +\cQ(F) = 0 \,.
\end{gather*}

\section{Topology Lemma on Covering Space} \label{Appendix.B}
\begin{lemm}\label{lemm:covers}
    Let $M$ be a smooth compact $n$-manifold with boundary$($possibly empty$)$  and $k\in\ZZ^+$. Then there exist at most $N = N(\pi_1(M),k)($possibly disconnected$)$ pairwise non-diffeomorphic smooth $k$-sheeted covering spaces of $M$.
\end{lemm}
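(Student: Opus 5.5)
The plan is to reduce to finitely many diffeomorphism classes of covers via the classification of covering spaces by subgroups of $\pi_1(M)$, together with the fact that a compact manifold has finitely generated fundamental group.

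\emph{Reduction to connected covers.} A possibly disconnected $k$-sheeted cover of $M$ restricts, over each component $M_1,\dots,M_s$ of $M$ (finitely many since $M$ is compact), to a cover whose sheet number is at most $k$. Over each $M_i$, this cover splits into connected covers $\tilde M_{i,1},\dots,\tilde M_{i,r}$ with sheet numbers $k_{i,1},\dots,k_{i,r}$ summing to $k$. So a cover is determined up to diffeomorphism by the following data: for each $i$, a partition of $k$, and for each part a connected cover of $M_i$ with that many sheets, taken up to diffeomorphism. The number of partitions of $k$ is finite, so it suffices to bound, for each component $M_i$ and each $d\le k$, the number of connected $d$-sheeted covers of $M_i$ up to diffeomorphism.

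\emph{Counting connected covers.} Fix a basepoint $x_i\in M_i$. By covering space theory, connected $d$-sheeted covers of $M_i$, up to isomorphism of covers, correspond to conjugacy classes of index-$d$ subgroups of $\pi_1(M_i,x_i)$. Equivalently, they correspond to transitive actions of $\pi_1(M_i,x_i)$ on $\{1,\dots,d\}$, i.e.\ homomorphisms $\pi_1(M_i,x_i)\to S_d$ with transitive image. Since $M_i$ is a compact smooth manifold with boundary, it is homotopy equivalent to a finite CW complex (for example via a triangulation, or via the collar and the double), so $\pi_1(M_i)$ is finitely generated, say by $g_i$ elements. A homomorphism to $S_d$ is determined by the images of the generators, so there are at most $(d!)^{g_i}$ such homomorphisms and hence at most $(d!)^{g_i}$ isomorphism classes of connected $d$-sheeted covers. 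For a given cover $p:\tilde M\to M_i$, pulling back the smooth structure of $M_i$ makes $p$ a local diffeomorphism, and this smooth structure is canonical: isomorphic covers are diffeomorphic, since a covering isomorphism is a local diffeomorphism with respect to the lifted structures. So the isomorphism count bounds the diffeomorphism count.

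\emph{Assembling the bound.} Combining the two steps gives
\[
N \le \prod_{i=1}^{s}\Bigl(\sum_{\text{partitions of }k}\ \prod_{\text{parts } d}(d!)^{g_i}\Bigr),
\]
which depends only on $k$ and the groups $\pi_1(M_i)$ (finitely many components, each with finitely many generators), that is, on $\pi_1(M)$ and $k$. The only slightly delicate point is justifying finite generation of $\pi_1$ for a compact manifold with boundary. I would handle it through the existence of a finite triangulation, or by noting that $M$ deformation retracts onto the complement of an open collar, which is a compact manifold, and hence is a compact ENR and homotopy equivalent to a finite complex. Everything else is standard covering theory.
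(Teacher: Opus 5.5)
Your proof is correct and uses the same ingredients as the paper's proof: finite generation of $\pi_1$ via a finite CW structure, the classification of $k$-sheeted covers by (conjugacy classes of) homomorphisms into the symmetric group, and a count of the possible images of the generators. The difference is only organizational. The paper applies directly the fact that possibly disconnected $k$-sheeted covers of a connected base correspond to arbitrary, not necessarily transitive, homomorphisms $\pi_1(M)\to\Sigma_k$, which gives the bound $(k!)^\ell$ without your partition step; your component-wise treatment of a disconnected base, and your remark that covering isomorphisms are diffeomorphisms for the lifted smooth structures, make explicit two points the paper leaves implicit.
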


\begin{proof}
    Since $M$ is a  smooth compact $n$-manifold, $M$ and some CW-complex  with finite cells are homotopy equivalent(see \cite{Hatcher} for more details about CW-complex, fundamental group and covering space). Hence $\pi_1(M)$ is finitely generated. From \cite[P70]{Hatcher}, we know that $k$-sheeted covering spaces of $M$ are classified by equivalence classes of homomorphisms $\pi_1(M) \to \Sigma_k\,,$ where $\Sigma_k$ is the symmetric group on $k$ symbols and the equivalence relation identifies a homomorphism $\rho$ with each of its conjugates $h^{-1}\rho h $ by elements $h \in \Sigma_k\,$.   Since $\pi_1(M)$ is finitely generated, a homomorphism is determined by the image of the $\ell(\in \NN)$ generators of $\pi_1(M)\, \,.$ Hence there are at most $(k!)^\ell$ such homomorphisms. It implies that there are at most $(k!)^\ell$ equivalence classes of homomorphisms $\pi_1(M) \to \Sigma_k\,,$ and  at most $(k!)^\ell$ pairwise non-diffeomorphic smooth $k$-sheeted covering spaces of $M.$
\end{proof}

\ 

\bigskip

 {\footnotesize

\ 
 
\noindent\textbf{Acknowledgment:} 
The first author is partially supported by NSFC 12371053. The authors wish to express
their sincere gratitude to Otis Chodosh for his interest and valuable comments.
\medskip

\ 

\noindent\textbf{Data availability:} No datasets were generated or analysed during the current study.
 
}

\bigskip

\noindent\textbf{\large Declarations} 

 \
 
 {\footnotesize

\noindent\textbf{Conflict of interest:} The authors declare that they have no conflict of interest. 
\medskip

\medskip

 }

\bibliographystyle{alpha}
\bibliography{ref}

@article{anderson1984compactification,
  TITLE={The compactification of a minimal submanifold in Euclidean space by the Gauss map},
  AUTHOR={Anderson, Michael T},
  YEAR={1984},
  JOURNAL={(Preprint)}
}

@article {Chodosh17,
    AUTHOR = {Chodosh, Otis and Ketover, Daniel and Maximo, Davi},
     TITLE = {Minimal hypersurfaces with bounded index},
   JOURNAL = {Invent. Math.},
  FJOURNAL = {Inventiones Mathematicae},
    VOLUME = {209},
      YEAR = {2017},
    NUMBER = {3},
     PAGES = {617--664},
      ISSN = {0020-9910,1432-1297},
   MRCLASS = {53C42 (53C21)},
  MRNUMBER = {3681392},
MRREVIEWER = {Christine\ Breiner},
       DOI = {10.1007/s00222-017-0717-5},
       URL = {https://doi.org/10.1007/s00222-017-0717-5},
}

@article {simons,
    AUTHOR = {Simons, James},
     TITLE = {Minimal varieties in riemannian manifolds},
   JOURNAL = {Ann. of Math. (2)},
  FJOURNAL = {Annals of Mathematics. Second Series},
    VOLUME = {88},
      YEAR = {1968},
     PAGES = {62--105},
      ISSN = {0003-486X},
   MRCLASS = {53.04 (35.00)},
  MRNUMBER = {233295},
MRREVIEWER = {W.\ F.\ Pohl},
       DOI = {10.2307/1970556},
       URL = {https://doi.org/10.2307/1970556},
}

@article {Tysk,
    AUTHOR = {Tysk, Johan},
     TITLE = {Finiteness of index and total scalar curvature for minimal
              hypersurfaces},
   JOURNAL = {Proc. Amer. Math. Soc.},
  FJOURNAL = {Proceedings of the American Mathematical Society},
    VOLUME = {105},
      YEAR = {1989},
    NUMBER = {2},
     PAGES = {429--435},
      ISSN = {0002-9939,1088-6826},
   MRCLASS = {53C42 (58C40 58E15)},
  MRNUMBER = {946639},
MRREVIEWER = {M.\ Elisa G. G. de Oliveira},
       DOI = {10.2307/2046961},
       URL = {https://doi.org/10.2307/2046961},
}

@article{Moore,
 ISSN = {00222518, 19435258},
 URL = {http://www.jstor.org/stable/24899166},
  author = {Helen Moore},
 journal = {Indiana University Mathematics Journal},
 number = {4},
 pages = {1021--1043},
 publisher = {Indiana University Mathematics Department},
 title = {Minimal Submanifolds with Finite Total Scalar Curvature},
 urldate = {2025-07-19},
 volume = {45},
 year = {1996}
}

@article {Ni,
    AUTHOR = {Ni, Lei},
     TITLE = {Gap theorems for minimal submanifolds in {${\bf R}^{n+1}$}},
   JOURNAL = {Comm. Anal. Geom.},
  FJOURNAL = {Communications in Analysis and Geometry},
    VOLUME = {9},
      YEAR = {2001},
    NUMBER = {3},
     PAGES = {641--656},
      ISSN = {1019-8385,1944-9992},
   MRCLASS = {53C42 (53C40)},
  MRNUMBER = {1895136},
MRREVIEWER = {Yi\ Bing\ Shen},
       DOI = {10.4310/CAG.2001.v9.n3.a2},
       URL = {https://doi.org/10.4310/CAG.2001.v9.n3.a2},
}

@book {xin,
    AUTHOR = {Xin, Yuanlong},
     TITLE = {Minimal submanifolds and related topics},
    SERIES = {Nankai Tracts in Mathematics},
    VOLUME = {16},
   EDITION = {Second},
 PUBLISHER = {World Scientific Publishing Co. Pte. Ltd., Hackensack, NJ},
      YEAR = {2019},
     PAGES = {xvi+380},
      ISBN = {978-981-3236-05-9},
   MRCLASS = {53C42 (53-02 53A10 53C40)},
  MRNUMBER = {3837570},
}

@article {allard.radial,
    AUTHOR = {Allard, William K. and Almgren, Jr., Frederick J.},
     TITLE = {On the radial behavior of minimal surfaces and the uniqueness
              of their tangent cones},
   JOURNAL = {Ann. of Math. (2)},
  FJOURNAL = {Annals of Mathematics. Second Series},
    VOLUME = {113},
      YEAR = {1981},
    NUMBER = {2},
     PAGES = {215--265},
      ISSN = {0003-486X},
   MRCLASS = {49F22 (53A10 58E12)},
  MRNUMBER = {607893},
MRREVIEWER = {E.\ Giusti},
       DOI = {10.2307/2006984},
       URL = {https://doi.org/10.2307/2006984},
}

@article {Allard1,
    AUTHOR = {Allard, William K.},
     TITLE = {On the first variation of a varifold},
   JOURNAL = {Ann. of Math. (2)},
  FJOURNAL = {Annals of Mathematics. Second Series},
    VOLUME = {95},
      YEAR = {1972},
     PAGES = {417--491},
      ISSN = {0003-486X},
   MRCLASS = {49F20},
  MRNUMBER = {307015},
MRREVIEWER = {M.\ Klingmann},
       DOI = {10.2307/1970868},
       URL = {https://doi.org/10.2307/1970868},
}

@book {leon.isolated,
     TITLE = {Harmonic mappings and minimal immersions},
    SERIES = {Lecture Notes in Mathematics},
    VOLUME = {1161},
    EDITOR = {Giusti, E.},
      NOTE = {Lectures given at the first 1984 session of the Centro
              Internationale Matematico Estivo (CIME) held at Montecatini,
              June 24--July 3, 1984},
 PUBLISHER = {Springer-Verlag, Berlin},
      YEAR = {1985},
     PAGES = {viii+285},
      ISBN = {3-540-16040-X},
   MRCLASS = {58E20},
  MRNUMBER = {821967},
       DOI = {10.1007/BFb0075135},
       URL = {https://doi.org/10.1007/BFb0075135},
}

@article {Mazet1,
    AUTHOR = {Mazet, Laurent},
     TITLE = {Minimal hypersurfaces asymptotic to {S}imons cones},
   JOURNAL = {J. Inst. Math. Jussieu},
  FJOURNAL = {Journal of the Institute of Mathematics of Jussieu. JIMJ.
              Journal de l'Institut de Math\'ematiques de Jussieu},
    VOLUME = {16},
      YEAR = {2017},
    NUMBER = {1},
     PAGES = {39--58},
      ISSN = {1474-7480,1475-3030},
   MRCLASS = {53A10},
  MRNUMBER = {3591961},
MRREVIEWER = {Jianquan\ Ge},
       DOI = {10.1017/S1474748015000110},
       URL = {https://doi.org/10.1017/S1474748015000110},
}

@book {GMT,
    AUTHOR = {Simon, Leon},
     TITLE = {Lectures on geometric measure theory},
    SERIES = {Proceedings of the Centre for Mathematical Analysis,
              Australian National University},
    VOLUME = {3},
 PUBLISHER = {Australian National University, Centre for Mathematical
              Analysis, Canberra},
      YEAR = {1983},
     PAGES = {vii+272},
      ISBN = {0-86784-429-9},
   MRCLASS = {49-01 (28A75 49F20)},
  MRNUMBER = {756417},
MRREVIEWER = {J.\ S.\ Joel},
}

@article {asymptotics,
    AUTHOR = {Simon, Leon},
     TITLE = {Asymptotics for a class of nonlinear evolution equations, with
              applications to geometric problems},
   JOURNAL = {Ann. of Math. (2)},
  FJOURNAL = {Annals of Mathematics. Second Series},
    VOLUME = {118},
      YEAR = {1983},
    NUMBER = {3},
     PAGES = {525--571},
      ISSN = {0003-486X,1939-8980},
   MRCLASS = {58G11 (35B40 49F99 58E20)},
  MRNUMBER = {727703},
MRREVIEWER = {Helmut\ Kaul},
       DOI = {10.2307/2006981},
       URL = {https://doi.org/10.2307/2006981},
}

@article {schocen,
    AUTHOR = {Schoen, Richard M.},
     TITLE = {Uniqueness, symmetry, and embeddedness of minimal surfaces},
   JOURNAL = {J. Differential Geom.},
  FJOURNAL = {Journal of Differential Geometry},
    VOLUME = {18},
      YEAR = {1983},
    NUMBER = {4},
     PAGES = {791--809},
      ISSN = {0022-040X,1945-743X},
   MRCLASS = {53A10 (58E12)},
  MRNUMBER = {730928},
MRREVIEWER = {V.\ M.\ Miklyukov},
       URL = {http://projecteuclid.org/euclid.jdg/1214438183},
}

@book {Hatcher,
    AUTHOR = {Hatcher, Allen},
     TITLE = {Algebraic topology},
 PUBLISHER = {Cambridge University Press, Cambridge},
      YEAR = {2002},
     PAGES = {xii+544},
      ISBN = {0-521-79160-X; 0-521-79540-0},
   MRCLASS = {55-01 (55-00)},
  MRNUMBER = {1867354},
MRREVIEWER = {Donald\ W.\ Kahn},
}

@article {Sogge1,
    AUTHOR = {Sogge, Christopher D.},
     TITLE = {Oscillatory integrals and spherical harmonics},
   JOURNAL = {Duke Math. J.},
  FJOURNAL = {Duke Mathematical Journal},
    VOLUME = {53},
      YEAR = {1986},
    NUMBER = {1},
     PAGES = {43--65},
      ISSN = {0012-7094,1547-7398},
   MRCLASS = {42B10 (42B15)},
  MRNUMBER = {835795},
MRREVIEWER = {Douglas\ Kurtz},
       DOI = {10.1215/S0012-7094-86-05303-2},
       URL = {https://doi.org/10.1215/S0012-7094-86-05303-2},
}

@book {Amiya,
    AUTHOR = {Mukherjee, Amiya},
     TITLE = {Differential topology},
   EDITION = {Second},
 PUBLISHER = {Hindustan Book Agency, New Delhi; Birkh\"auser/Springer, Cham},
      YEAR = {2015},
     PAGES = {xiv+349},
      ISBN = {978-3-319-19044-0; 978-3-319-19045-7},
   MRCLASS = {58-01 (53-01 57-02)},
  MRNUMBER = {3379695},
       DOI = {10.1007/978-3-319-19045-7},
       URL = {https://doi.org/10.1007/978-3-319-19045-7},
}

@article {Chern,
    AUTHOR = {Chern, Shiing-shen and Osserman, Robert},
     TITLE = {Complete minimal surfaces in euclidean {$n$}-space},
   JOURNAL = {J. Analyse Math.},
  FJOURNAL = {Journal d'Analyse Math\'ematique},
    VOLUME = {19},
      YEAR = {1967},
     PAGES = {15--34},
      ISSN = {0021-7670,1565-8538},
   MRCLASS = {53.04},
  MRNUMBER = {226514},
MRREVIEWER = {H.\ B.\ Jenkins},
       DOI = {10.1007/BF02788707},
       URL = {https://doi.org/10.1007/BF02788707},
}

@article {ColdingMincozzi1,
    AUTHOR = {Colding, Tobias H. and Minicozzi, II, William P.},
     TITLE = {The {C}alabi-{Y}au conjectures for embedded surfaces},
   JOURNAL = {Ann. of Math. (2)},
  FJOURNAL = {Annals of Mathematics. Second Series},
    VOLUME = {167},
      YEAR = {2008},
    NUMBER = {1},
     PAGES = {211--243},
      ISSN = {0003-486X,1939-8980},
   MRCLASS = {53A10 (53C42)},
  MRNUMBER = {2373154},
MRREVIEWER = {Fei-Tsen\ Liang},
       DOI = {10.4007/annals.2008.167.211},
       URL = {https://doi.org/10.4007/annals.2008.167.211},
}

@article {collin1,
    AUTHOR = {Collin, Pascal},
     TITLE = {Topologie et courbure des surfaces minimales proprement
              plong\'ees de {$\mathbf R^3$}},
   JOURNAL = {Ann. of Math. (2)},
  FJOURNAL = {Annals of Mathematics. Second Series},
    VOLUME = {145},
      YEAR = {1997},
    NUMBER = {1},
     PAGES = {1--31},
      ISSN = {0003-486X,1939-8980},
   MRCLASS = {53A10},
  MRNUMBER = {1432035},
MRREVIEWER = {Karsten\ Grosse-Brauckmann},
       DOI = {10.2307/2951822},
       URL = {https://doi.org/10.2307/2951822},
}

@article {Meeks2,
    AUTHOR = {Meeks, III, William H. and P\'erez, Joaqu\'in and Ros,
              Antonio},
     TITLE = {Bounds on the topology and index of minimal surfaces},
   JOURNAL = {Acta Math.},
  FJOURNAL = {Acta Mathematica},
    VOLUME = {223},
      YEAR = {2019},
    NUMBER = {1},
     PAGES = {113--149},
      ISSN = {0001-5962,1871-2509},
   MRCLASS = {53A10 (57N05)},
  MRNUMBER = {4018264},
MRREVIEWER = {Min\ Ru},
       DOI = {10.4310/ACTA.2019.v223.n1.a2},
       URL = {https://doi.org/10.4310/ACTA.2019.v223.n1.a2},
}

@article {Choi1,
    AUTHOR = {Choi, Hyeong In and Schoen, Richard},
     TITLE = {The space of minimal embeddings of a surface into a
              three-dimensional manifold of positive {R}icci curvature},
   JOURNAL = {Invent. Math.},
  FJOURNAL = {Inventiones Mathematicae},
    VOLUME = {81},
      YEAR = {1985},
    NUMBER = {3},
     PAGES = {387--394},
      ISSN = {0020-9910,1432-1297},
   MRCLASS = {58E12 (53C42 58D10)},
  MRNUMBER = {807063},
MRREVIEWER = {J.\ Eells},
       DOI = {10.1007/BF01388577},
       URL = {https://doi.org/10.1007/BF01388577},
}

@article {songantoine1,
    AUTHOR = {Song, Antoine},
     TITLE = {Morse index, {B}etti numbers, and singular set of bounded area
              minimal hypersurfaces},
   JOURNAL = {Duke Math. J.},
  FJOURNAL = {Duke Mathematical Journal},
    VOLUME = {172},
      YEAR = {2023},
    NUMBER = {11},
     PAGES = {2073--2147},
      ISSN = {0012-7094,1547-7398},
   MRCLASS = {53A10 (53C42)},
  MRNUMBER = {4627248},
       DOI = {10.1215/00127094-2023-0012},
       URL = {https://doi.org/10.1215/00127094-2023-0012},
}

@book {Hirsch1,
    AUTHOR = {Hirsch, Morris W.},
     TITLE = {Differential topology},
    SERIES = {Graduate Texts in Mathematics},
    VOLUME = {33},
      NOTE = {Corrected reprint of the 1976 original},
 PUBLISHER = {Springer-Verlag, New York},
      YEAR = {1994},
     PAGES = {x+222},
      ISBN = {0-387-90148-5},
   MRCLASS = {57-01 (58-01)},
  MRNUMBER = {1336822},
}

@book{Chodosh1,
    AUTHOR = {Otis Chodosh},
     TITLE = {Introduction to minimal surfaces},
     PUBLISHER = {\url{https://web.stanford.edu/~ochodosh/Math286-min-surf.pdf}},
     YEAR= {2025},
     URL ={https://web.stanford.edu/~ochodosh/Math286-min-surf.pdf}
}

@article {sharp,
    AUTHOR = {Buzano, Reto and Sharp, Ben},
     TITLE = {Qualitative and quantitative estimates for minimal
              hypersurfaces with bounded index and area},
   JOURNAL = {Trans. Amer. Math. Soc.},
  FJOURNAL = {Transactions of the American Mathematical Society},
    VOLUME = {370},
      YEAR = {2018},
    NUMBER = {6},
     PAGES = {4373--4399},
      ISSN = {0002-9947,1088-6850},
   MRCLASS = {53A10 (49Q05 58E12)},
  MRNUMBER = {3811532},
MRREVIEWER = {Peter\ McGrath},
       DOI = {10.1090/tran/7168},
       URL = {https://doi.org/10.1090/tran/7168},
}

@article {edelen,
    AUTHOR = {Edelen, Nick},
     TITLE = {Degeneration of 7-dimensional minimal hypersurfaces which are
              stable or have a bounded index},
   JOURNAL = {Arch. Ration. Mech. Anal.},
  FJOURNAL = {Archive for Rational Mechanics and Analysis},
    VOLUME = {248},
      YEAR = {2024},
    NUMBER = {4},
     PAGES = {Paper No. 65, 73},
      ISSN = {0003-9527,1432-0673},
   MRCLASS = {58E12 (49Q05 53A10)},
  MRNUMBER = {4768490},
MRREVIEWER = {Panayotis\ Vyridis},
       DOI = {10.1007/s00205-024-02003-w},
       URL = {https://doi.org/10.1007/s00205-024-02003-w},
}

\end{document}